\newcolumntype{C}{>{\centering\arraybackslash}X}
\newtheorem{thm}{Theorem} [section]
\newtheorem{prop}[thm]{Proposition} 
\newtheorem{lemma}[thm]{Lemma}
\newtheorem{cor}[thm]{Corollary}
\theoremstyle{definition}
\newtheorem{example}[thm]{Example}
\newtheorem{conjecture}[thm]{Conjecture}
\newtheorem{remark}[thm]{Remark}
\numberwithin{thm}{section}  
\numberwithin{equation}{section} 
\def\Pc{{\mathcal P}}
\def\NZQ{\mathbb}               
\def\ZZ{{\NZQ Z}}
\def\RR{{\NZQ R}}
\def\eb{{\mathbf e}}
\def\opn#1#2{\def#1{\operatorname{#2}}} 
\opn\gr{gr}
\def\Ac{{\mathcal A}}
\def\Mc{{\mathcal M}}
\def\Pc{{\mathcal P}}
\def\Qc{{\mathcal Q}}
\def\Cc{{\mathcal C}}
\def\int{{\rm int}}
\title{Facet numbers of non-centrally symmetric reflexive polytopes arising from posets}
\author{AKI MORI, KENTA MORI and HIDEFUMI OHSUGI}
\date{}
\address{Aki Mori,
	Center for Physics and Mathematics, Institute for Liberal Arts and Sciences, Osaka Electro-Communication University, Neyagawa, Osaka 572-8530, Japan} 
\email{a-mori@osakac.ac.jp}
\address{Kenta Mori,
	Department of Mathematical Sciences,
	School of Science,
	Kwansei Gakuin University,
	Sanda, Hyogo 669-1330, Japan}
\email{k-mori@kwansei.ac.jp}
\address{Hidefumi Ohsugi,
	Department of Mathematical Sciences,
	School of Science,
	Kwansei Gakuin University,
	Sanda, Hyogo 669-1330, Japan} 
\email{ohsugi@kwansei.ac.jp}
\subjclass[2020]{52B20, 52B12}
\keywords{twinned chain polytopes, number of facets, partially ordered set, reflexive polytopes}
\begin{document}

\begin{abstract}
Twinned chain polytopes form a broad class of non-centrally symmetric reflexive polytopes and exhibit intriguing structures.
In the present paper, we show that the number of facets of $d$-dimensional twinned chain polytopes is at most $6^{d/2}$.
In case $d$ is even, the equality holds if and only if the polytope is isomorphic to a free sum of $d/2$ copies of del Pezzo polygons.
This result contributes a partial answer to Nill's conjecture: the number of facets of a $d$-dimensional reflexive polytope is at most $6^{d/2}$.
\end{abstract}

\maketitle

\section{Introduction}

A convex polytope $\Pc \subset \RR^d$ is called a {\it lattice polytope} if any vertex of $\Pc$ belongs to $\ZZ^d$. 
A {\it reflexive polytope} is a lattice polytope that contains the origin in its interior and has the property that its dual polytope is also a lattice polytope.
The concept of reflexive polytopes was introduced by Batyrev \cite{mirror} in the study of Gorenstein toric Fano varieties.
Since then, reflexive polytopes have been extensively studied from various perspectives, including combinatorics, commutative algebra and algebraic geometry.
It is known that there are only finitely many reflexive polytopes in each dimension, up to unimodular equivalence \cite{LZ}. 
Furthermore, if the dimension $d$ satisfies $d \leq 4$, reflexive polytopes can be enumerated \cite{KS, KS2}.

Let $N(\Pc)$ be the number of facets of a lattice polytope $\Pc$.
Nill proposed the following conjecture:

\begin{conjecture}[{\cite[Conjecture 5.2]{Nillconj}}] \label{Nconj}
Let $\Pc$ be a $d$-dimensional reflexive polytope.
Then $N(\Pc) \le 6^{d/2}$.    
\end{conjecture}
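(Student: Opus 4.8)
The plan is to separate the combinatorial content from the lattice-geometric content and to exploit the multiplicative shape of the bound $6^{d/2}=(\sqrt6)^{\,d}$. Throughout I use the standard fact that polar duality sends a reflexive polytope $\Pc$ to a reflexive polytope $\Pc^{*}$ with $N(\Pc)=V(\Pc^{*})$, the number of vertices of $\Pc^{*}$; since $\Pc\mapsto\Pc^{*}$ is an involution on $d$-dimensional reflexive polytopes, Conjecture~\ref{Nconj} is equivalent to the statement that a $d$-dimensional reflexive polytope has at most $6^{d/2}$ vertices, and I would pass freely between the two formulations.

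The first reduction is to indecomposable polytopes. Every lattice polytope with the origin in its interior admits a free-sum decomposition $\Pc\cong\Pc_{1}\oplus\cdots\oplus\Pc_{k}$ into indecomposable summands (split off a free summand whenever the ambient lattice decomposes compatibly, and iterate until no further splitting is possible). A free sum of reflexive polytopes is reflexive and, dually, a summand of a reflexive free sum is reflexive, so each $\Pc_{i}$ is a reflexive polytope of some dimension $d_{i}$ with $\sum_{i}d_{i}=d$; moreover the facets of a free sum are exactly the joins of facets of the summands, whence $N(\Pc)=\prod_{i}N(\Pc_{i})$. Consequently, if one knows the inequality
\[
N(\Qc)\le 6^{(\dim \Qc)/2}
\]
for every indecomposable reflexive polytope $\Qc$, then $N(\Pc)=\prod_{i}N(\Pc_{i})\le\prod_{i}6^{d_{i}/2}=6^{d/2}$, and the general conjecture follows. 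This reduction also pins down the equality case: a $1$-dimensional summand contributes a factor $2<\sqrt6$, so equality $N(\Pc)=6^{d/2}$ forces every summand to be even-dimensional and extremal. A finite check on the sixteen reflexive polygons shows that the del Pezzo polygon is the unique indecomposable reflexive polytope of dimension $2$ attaining $6=6^{2/2}$ facets; granting that no indecomposable reflexive polytope of dimension $\ge 3$ meets the bound with equality, one recovers exactly the characterization recorded in the abstract, namely that equality holds only for a free sum of $d/2$ del Pezzo polygons.

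The real difficulty is thus concentrated in the indecomposable case, and here one cannot hope for an easy shortcut: the extremal free sum of hexagons is already a \emph{simplicial} reflexive polytope (each facet is a join of edges of the summand polygons, hence a simplex), so the simplicial subclass already contains the extremizers, and the naive Upper Bound Theorem estimate overshoots $6^{d/2}$. The substantive work must therefore use reflexivity itself to sharpen the face-number bounds. Two mechanisms seem most promising. The first is an induction that descends two dimensions at a time: produce a lattice projection $\pi\colon\RR^{d}\to\RR^{d-2}$ with $\pi(\Pc)$ reflexive and with the fibres over each facet of $\pi(\Pc)$ supporting at most six facets of $\Pc$, so that the inductive hypothesis yields $N(\Pc)\le 6\cdot 6^{(d-2)/2}=6^{d/2}$. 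The second is to read the facet count of a simplicial reflexive polytope from its $h$-vector and to constrain that vector using the Gorenstein/Fano structure (equivalently, the palindromy of the Ehrhart $h^{*}$-vector), aiming for a bound $\sum_i h_i\le 6^{d/2}$ stronger than what the generic $g$-theorem inequalities supply.

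The hard part — and the reason Conjecture~\ref{Nconj} is still open in full generality — is precisely that neither mechanism is known to work uniformly: there is no general product-type normal form for indecomposable reflexive polytopes, the required fibrewise factor of six is not forced by any purely local condition at the facets or vertices, and the extra constraints that reflexivity imposes on the $h$-vector of a simplicial Fano polytope have not been quantified sharply enough. The realistic programme, which the present paper carries out, is therefore to verify the inequality on progressively larger structured families — here the twinned chain polytopes arising from posets — in the expectation that the combinatorial reason for the bound, and for the del Pezzo free sums being the only extremal examples, will eventually crystallize into the uniform argument the conjecture demands.
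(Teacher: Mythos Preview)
The statement you were asked to prove is labeled as a \emph{Conjecture} in the paper, not a theorem; the paper does not prove it and does not claim to. What the paper actually establishes is Theorem~\ref{main theorem}, which verifies the inequality $N(\Pc)\le 6^{d/2}$ only for the special class of twinned chain polytopes $\Pc=\Gamma(P,Q)$. There is therefore no proof of Conjecture~\ref{Nconj} in the paper against which to compare your attempt.

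Your proposal is likewise not a proof of the conjecture, and you say so yourself. The free-sum reduction to indecomposable reflexive polytopes is correct and standard (the multiplicativity $N(\Pc_1\oplus\Pc_2)=N(\Pc_1)\cdot N(\Pc_2)$ and the fact that summands of a reflexive free sum are reflexive are both fine), and your analysis of the equality case---that every summand must be even-dimensional and that the del Pezzo hexagon is the unique indecomposable reflexive polygon with six edges---is accurate. But after the reduction you explicitly leave the indecomposable case open and describe two mechanisms that ``are not known to work uniformly.'' So the genuine gap in your proposal is exactly the one you name: bounding $N(\Qc)$ for an arbitrary indecomposable reflexive polytope $\Qc$ of dimension $\ge 3$. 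That is the content of the conjecture, and it remains unresolved.
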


In Nill's original paper, this conjecture was stated as an upper bound on the number of vertices.
For dimensions $d \leq 4$, this conjecture is supported by the database of reflexive polytopes: $N(\Pc) \le 6$ if $d=2$, $N(\Pc) \le 14 = \lfloor 6^{3/2} \rfloor$ if $d =3$, and $N(\Pc) \le 36$ if $d=4$ \cite{KS3}.
A rigorous proof is known only for $d = 2$ \cite[Corollary 3.3]{Nillconj}. 
If $\Pc$ is a pseudo-symmetric reflexive simplicial $d$-dimensional polytope, then Conjecture~\ref{Nconj} holds, and the maximum $6^{d/2}$ is attained if and only if $\Pc$ is a free sum of $d/2$ copies of del Pezzo polygons \cite{Nillps}.
Additionally, Conjecture~\ref{Nconj} has been proven for symmetric edge polytopes arising from join graphs and connected bipartite graphs \cite{MMO}.

Let $(P, \leq_P)$ be a finite partially ordered set (``poset'' for short) on $P =\{p_1,\dots,p_d\}$.
An {\it antichain} of $P$ is a subset $W \subset P$ such that
$p_i$ and $p_j$ are incomparable for any $p_i,p_j \in W$ with $i\neq j$.
In particular, the empty set $\emptyset$ as well as each singleton $\{p_i\}$ is an antichain of $P$.
Let $\Ac(P)$ denote the set of antichains of $P$.
For each $A \in \Ac(P)$, we set $\rho(A) = \sum_{p_i \in A} \eb_i \in \RR^d$,
where $\eb_1, \ldots, \eb_d$ are the canonical unit coordinate vectors of $\RR^d$.  
For example, $\rho(\emptyset)$ is the origin of $\RR^d$.
Stanley \cite{Sta} defined a $d$-dimensional lattice polytope called the {\it chain polytope} of a poset $P$ as 
$$
\Cc(P) = \textrm{conv} (\{\rho(A) : A \in \Ac(P)\}) \subset \RR^d.
$$
For two lattice polytopes $\Pc, \Qc \subset \RR^d$, we define
$$
\Gamma(\Pc, \Qc) = \textrm{conv}(\Pc \cup (-\Qc)) \subset \RR^d.
$$
Let $(Q, \leq_Q)$ be a poset on $Q = \{q_1,\ldots,q_d\}$
with $P \cap Q = \emptyset$.
Then the polytope $\Gamma(\Cc(P), \Cc(Q))$ is called the {\it twinned chain polytope} \cite{T}.
It is known \cite{OH2} that $\Gamma(\Cc(P), \Cc(Q))$ is a reflexive polytope.

The study of $\Gamma(\Pc, \Qc)$ was inspired by research on centrally symmetric configurations of integer matrices \cite{OH}.
As in the case of symmetric edge polytopes \cite{MHNOH}, many reflexive polytopes exhibit the structure of $\Gamma(\Pc, \Qc)$.
In particular, when $\Pc$ and $\Qc$ are lattice polytopes arising from posets, important properties such as normality, reflexivity, unimodular equivalence and the Ehrhart polynomials of these polytopes have been studied in \cite{HM, HMT, HMT2, T}.
In \cite{T}, it was shown that twinned chain polytopes $\Gamma(\Cc(P), \Cc(Q))$ are locally anti-blocking, and hence the $h^*$-polynomials of $\Gamma(\Cc(P), \Cc(Q))$ are $\gamma$-positive.
The $\gamma$-positivity of $h^*$-polynomials of locally anti-blocking reflexive polytopes is studied in \cite{OT2}.
On the other hand, we define
$$
\Omega(\Pc, \Qc) = \textrm{conv}(\Pc \times \{1\} \cup (-\Qc) \times \{-1\}) \subset \RR^{d+1}.
$$
The polytope $\Omega(\Pc, \Qc)$ can be regarded as a variation of the Cayley sum of $\Pc$ and $\Qc$.
In particular, $\Omega(\Pc, \Pc)$ is known as the {\em Hansen polytope}~\cite{H} 
when $\Pc$ is the {\em stable set polytope} of a graph. 
In~\cite{FHSZ}, it is shown that Kalai's $3^d$ conjecture~\cite{K} holds for Hansen polytopes of split graphs.
Combinatorial properties of $\Omega(\Cc(P), \Cc(Q))$ and $\Omega(2\Cc(P), 2\Cc(Q))$ are studied in \cite{HT} and \cite{CFS}, respectively.   
It is known \cite{HT} that $\Omega(\Cc(P), \Cc(Q))$ is a reflexive polytope.
Moreover, $\Gamma(\Cc(P), \Cc(Q))$ arises as a projection of $\Omega(2\Cc(P), 2\Cc(Q))$.
In this sense, twinned chain polytopes form a natural class of reflexive polytopes reflecting certain structural features of $\Omega(2\Cc(P), 2\Cc(Q))$ while being connected to the combinatorics of posets.

In the present paper, we investigate the number of facets of twinned chain polytopes and confirm Conjecture \ref{Nconj} for these polytopes.
We recall some notation about posets.
Given a subset $W$ of $[d]:=\{1,2,\dots,d\}$, the \textit{induced subposet} of $P$ on $W$ is the poset $(P_W, \leq_{P_W})$ on 
$P_W = \{p_i : i \in W\}$
such that $p_i \leq_{P_W} p_j$ if and only if $p_i \leq_{P} p_j$.
The {\it ordinal sum} of $P$ and $Q$ is the poset $(P \oplus Q, \leq_{P \oplus Q})$ on $P \oplus Q = P \cup Q$ such that $s \leq_{P \oplus Q} t$ if (a) $s, t \in P$ and $s \leq_{P} t$, (b) $s, t \in Q$ and $s \leq_{Q} t$, or (c) $s \in P$ and $t \in Q$.
If $P$ and $Q$ are posets on {\it disjoint} sets, then the {\it disjoint union} of $P$ and $Q$ is the poset $P + Q$ on the union $P \cup Q$ such that $s \leq t$ in $P + Q$ if either (a) $s, t \in P$ and $s \leq t$ in $P$, or (b) $s, t \in Q$ and $s \leq t$ in $Q$. 
A bijective map $\varphi : P \rightarrow Q$ is
called an {\em order isomorphism} if for all $x, y \in P$, we have $x \le_{P} y$ if and only if
$\varphi(x) \le_{Q} \varphi(y)$. 
We write $P \cong Q$ whenever $P$ and $Q$ are order isomorphic.
Let ${\bf 1}$ be the singleton poset, ${\bf C}_n$ the $n$-element chain poset (i.e., a poset in which any two elements are comparable), 
and ${\bf I}_n$ the $n$-element antichain poset.
Given a poset $(P, \le_P)$, the {\it comparability graph} $G_P$
of $P$ is the graph on the vertex set $P$
whose edge set is 
$\{\{p_i,p_j\} : p_i \mbox{ and } p_j 
\mbox{ is comparable in } P\}$.
Throughout this paper, $\Gamma(\Cc(P), \Cc(Q))$ will be denoted as $\Gamma(P, Q)$.

The main result of this paper is as follows.
This result, obtained within the framework of non-centrally symmetric polytopes, adds significant value to the study of such structures.

\begin{thm}\label{main theorem} 
Let $P =\{p_1,\dots,p_d\}$ and $Q = \{q_1,\ldots,q_d\}$ be posets with $P \cap Q = \emptyset$.
Then we have 
\begin{equation}  \label{even odd eq}
    N(\Gamma (P, Q)) \le 
    \left\{
    \begin{array}{ccl}
    6^{\frac{d}{2}} & & \mbox{if } d \mbox{ is even},\\
    14 \cdot 6^{\frac{d-3}{2}} & \left(=\frac{7\sqrt{6}}{18} \cdot 6^{\frac{d}{2}} \right) & \mbox{if } d \mbox{ is odd}.\\
    \end{array}
    \right.
\end{equation}
Here $7\sqrt{6}/18 \fallingdotseq 0.95$.
In case $d$ is even, the equality holds if and only if 
$P, Q \cong {\bf I}_2 \oplus \cdots \oplus {\bf I}_2$ and
the map $\varphi: P \rightarrow Q$ defined by $\varphi(p_i) = q_i$ induces a graph  isomorphism from $G_P$ to $G_Q$.
\end{thm}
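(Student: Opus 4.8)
The plan is to reduce the problem to a combinatorial analysis of facets of twinned chain polytopes via the known description of $\Gamma(\Cc(P),\Cc(Q))$ as a locally anti-blocking polytope. Since $\Gamma(P,Q) = \mathrm{conv}(\Cc(P) \cup (-\Cc(Q)))$ and both $\Cc(P)$ and $\Cc(Q)$ lie in the nonnegative orthant, the facets of $\Gamma(P,Q)$ should split according to the sign pattern of their outer normal vectors. First I would recall (or re-derive) the facet description of $\Gamma(P,Q)$: for each subset $S \subseteq [d]$, one looks at the ``mixed'' polytope obtained by reflecting the coordinates indexed by $S$, and the facets of $\Gamma(P,Q)$ with a fixed sign pattern $\varepsilon \in \{+,-\}^d$ correspond to certain facets of $\Cc(P_{S^c})$ together with facets of $\Cc(Q_S)$, where $S = \{i : \varepsilon_i = -\}$. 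The key point is that $N(\Gamma(P,Q))$ decomposes as a sum over $S \subseteq [d]$ of a product of a quantity depending only on $P_{S^c}$ and a quantity depending only on $Q_S$; this is exactly the ``locally anti-blocking'' structure established in \cite{T}.

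The second step is to bound, for each poset $R$ on $k$ elements, the relevant facet-count contribution $f(R)$ coming from $\Cc(R)$, and to show $\sum_{S} f(P_{S^c}) f(Q_S)$ is maximized in a controlled way. The natural tool is to set up a recursion on $d$: pick a distinguished element (say a maximal element $p_d$ of $P$, or exploit the ordinal-sum/disjoint-union structure), and split the antichains of $P$ according to whether they contain $p_d$. This should yield an inequality of the form $N(\Gamma(P,Q)) \le \max\{\,\cdots\,\}$ where the right-hand side involves $N$ of twinned chain polytopes on fewer elements. Then an induction on $d$ gives the bound $6^{d/2}$ (even $d$) and $14\cdot 6^{(d-3)/2}$ (odd $d$), the odd base case $d=3$ being checked directly (there $14$ is the known maximum $\lfloor 6^{3/2}\rfloor$), and the even base case $d=2$ being the del Pezzo hexagon with $6$ facets. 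The factor $6$ per two dimensions, and the value $6 = N(\Gamma({\bf I}_2,{\bf I}_2))$ for the del Pezzo polygon, should emerge as the unique optimum of the two-step recursion.

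The third step is the equality analysis for even $d$. Here I would trace through the induction: equality $N(\Gamma(P,Q)) = 6^{d/2}$ forces equality at every stage of the recursion, which pins down the local structure of $P$ and $Q$ around the chosen element. Carefully bookkeeping these local constraints should show that $P$ and $Q$ must each decompose, compatibly, as an ordinal sum ${\bf I}_2 \oplus \cdots \oplus {\bf I}_2$ of two-element antichains, and moreover that the identification $\varphi(p_i)=q_i$ must match the ${\bf I}_2$-blocks, i.e.\ induce an isomorphism $G_P \cong G_Q$. Conversely, for $P=Q={\bf I}_2 \oplus\cdots\oplus{\bf I}_2$ with matching labels, one computes directly that $\Gamma(P,Q)$ is the free sum of $d/2$ del Pezzo polygons, which has exactly $6^{d/2}$ facets; the free-sum structure follows because the ordinal-sum decomposition of the poset makes $\Cc(P)$ (hence $\Gamma(P,Q)$) split as a free sum of the two-dimensional pieces.

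I expect the main obstacle to be the bookkeeping in the second step: extracting a clean recursion for $N(\Gamma(P,Q))$ from the locally anti-blocking facet description, and verifying that the product-sum $\sum_S f(P_{S^c})f(Q_S)$ is dominated by the ``balanced'' split into ${\bf I}_2$-blocks rather than by some unbalanced configuration. Controlling how antichain-counts of induced subposets behave under adding an element — and ruling out the exponentially many competing sign patterns — is the crux; everything else (the base cases, the converse computation, and the equality rigidity once the recursion is tight) should be comparatively routine.
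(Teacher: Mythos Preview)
Your broad strategy (induction on $d$, base cases $d\le 3$, trace rigidity through the recursion for the equality case) matches the paper's.  The genuine gap is the centerpiece of your second step.  By the facet description from \cite{T}, $N(\Gamma(P,Q)) = \bigl|\bigcup_{W\subseteq [d]} \Mc(P_W\oplus Q_{\overline W})\bigr|$, and while each $\Mc(P_W\oplus Q_{\overline W})$ factors as a product $\Mc(P_W)\times\Mc(Q_{\overline W})$, the union over $W$ is \emph{not} disjoint (Example~\ref{rei}(b) exhibits a chain appearing for two different $W$).  So your claimed decomposition $\sum_S f(P_{S^c})f(Q_S)$ is at best an upper bound, not an equality; the non-disjointness is precisely the interaction between the labelings of $P$ and $Q$, which is why $N$ depends on the labeling even when $P\cong Q$ as posets.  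A recursion that ``picks a maximal element of $P$ and splits antichains'' does not see this interaction.

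The paper's recursion instead picks an index $k$ and works with the pair $(s,t)=(\#\text{ elements incomparable to }p_k\text{ in }P,\ \#\text{ incomparable to }q_k\text{ in }Q)$.  The key lemma (Lemma~\ref{most important}) bounds the number of facets containing $p_k$ by $c(Q_{\{i_1,\dots,i_s\}})\cdot N(\Gamma(P_J,Q_J))$ with $J=[d-1]\setminus\{i_1,\dots,i_s\}$ --- note the mixing of $P$- and $Q$-data.  One then needs separate arguments for $s+t\ge 4$, for $(s,t)=(1,1)$ (this is where equality lives and the ${\bf I}_2$-block structure is extracted), for $(s,t)\in\{(1,2),(2,1)\}$, and for the degenerate case where every index has full degree in at least one of $G_P,G_Q$ (handled via the Moon--Moser bound on maximal cliques).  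This $(s,t)$ case analysis is the real technical content, and your proposal does not anticipate it.  Also, the $d=3$ base case is a genuine case analysis (over the four comparability graphs on three vertices, establishing $N\le 13<14$), not merely a database lookup.
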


For example, if $d=1$, then $N(\Gamma (P, Q))=2$ satisfies (\ref{even odd eq}).
The posets $P, Q \cong {\bf I}_2 \oplus {\bf I}_2 \oplus {\bf I}_2$
in Figure \ref{d=6} are an example of a pair of posets for which the equality holds in Theorem \ref{main theorem}.
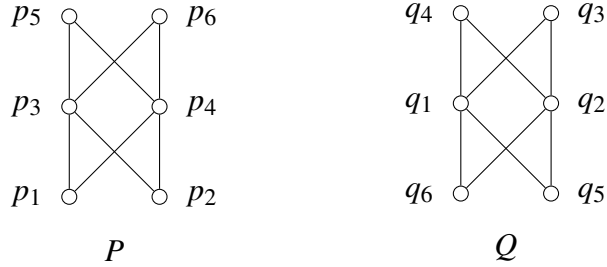
\begin{figure}\label{d=6}
\begin{center}
    \begin{tikzpicture}[scale=1.2]
        \node[draw, circle, fill=white, inner sep=2pt] (p1) at (0,0) {};
        \node[left=1mm of p1] {$p_1$};

        \node[draw, circle, fill=white, inner sep=2pt] (p2) at (1,0) {};
        \node[right=1mm of p2] {$p_2$};

        \node[draw, circle, fill=white, inner sep=2pt] (p3) at (0,1) {};
        \node[left=1mm of p3] {$p_3$};

        \node[draw, circle, fill=white, inner sep=2pt] (p4) at (1,1) {};
        \node[right=1mm of p4] {$p_4$};

        \node[draw, circle, fill=white, inner sep=2pt] (p5) at (0,2) {};
        \node[left=1mm of p5] {$p_5$};

        \node[draw, circle, fill=white, inner sep=2pt] (p6) at (1,2) {};
        \node[right=1mm of p6] {$p_6$};

        \draw (p1) -- (p3);
        \draw (p1) -- (p4);
        \draw (p2) -- (p3);
        \draw (p2) -- (p4);
        \draw (p3) -- (p5);
        \draw (p3) -- (p6);
        \draw (p4) -- (p5);
        \draw (p4) -- (p6);

        \node at (0.5,-0.6) {$P$};
    \end{tikzpicture}
    \hspace{2cm}
    \begin{tikzpicture}[scale=1.2]
        \node[draw, circle, fill=white, inner sep=2pt] (q1) at (0,0) {};
        \node[left=1mm of q1] {$q_6$};

        \node[draw, circle, fill=white, inner sep=2pt] (q2) at (1,0) {};
        \node[right=1mm of q2] {$q_5$};

        \node[draw, circle, fill=white, inner sep=2pt] (q3) at (0,1) {};
        \node[left=1mm of q3] {$q_1$};

        \node[draw, circle, fill=white, inner sep=2pt] (q4) at (1,1) {};
        \node[right=1mm of q4] {$q_2$};

        \node[draw, circle, fill=white, inner sep=2pt] (q5) at (0,2) {};
        \node[left=1mm of q5] {$q_4$};

        \node[draw, circle, fill=white, inner sep=2pt] (q6) at (1,2) {};
        \node[right=1mm of q6] {$q_3$};

        \draw (q1) -- (q3);
        \draw (q1) -- (q4);
        \draw (q2) -- (q3);
        \draw (q2) -- (q4);
        \draw (q3) -- (q5);
        \draw (q3) -- (q6);
        \draw (q4) -- (q5);
        \draw (q4) -- (q6);

        \node at (0.5,-0.6) {$Q$};
    \end{tikzpicture}
\end{center}
\caption{Posets $P$ and $Q$ for which the equality holds in Theorem \ref{main theorem}}
\end{figure}
Note that the condition in Theorem \ref{main theorem} is the same as that
in the conjecture \cite[Conjecture 2]{BrBr} on the facets of symmetric edge polytopes.

It is worth mentioning the number of twinned chain polytopes $\Gamma (P, Q)$.
In Table \ref{table S/T}, we show the number of pairs of comparability graphs on $d$ vertices \cite[A123416]{OEIS}.
The chain polytope $\Cc(P)$ is determined by the comparability graph $G_P$.
However, permuting the labels of the vertices of $G_Q$ does not preserve the unimodular equivalence of $\Gamma (P, Q)$ (see Example \ref{rei} for details).
Hence, the number of twinned chain polytopes can be larger than the number of pairs of comparability graphs.
Although twinned chain polytopes constitute a small fraction of all reflexive polytopes, they form a broad class in their own right.

\begin{table}[ht]
    \centering
    \renewcommand{\arraystretch}{1.2}
    \begin{tabular}{|c|c|c|c|c|c|c|c|c|c|}
        \hline
        number of vertices & 2 & 3 & 4 & 5 & 6 & 7 & 8  \\ \hline
        comparability graphs & 2 & 4 & 11 & 33 & 144 & 824 & 6,793  \\ \hline
        pairs of comparability graphs & 3 & 10 & 66 & 561 & 10,440 & 339,900 & 23,075,821  \\ \hline
    \end{tabular}
    \smallskip
    \caption{Number of comparability graphs / pairs of comparability graphs}
    \label{table S/T}
\end{table}

The present paper is organized as follows.
In Section \ref{sec:review of facets}, we review the equation for the number of facets of $\Gamma(P, Q)$ and provide remarks on labeling.
We show that the ordinal sum of posets corresponds to the free sum of the associated twinned chain polytopes, and use this to determine $N(\Gamma(P, Q))$ when $P$ and $Q$ are elementary posets.
In Section \ref{sec:degree of vertices},
we prove several lemmas on 
degrees of vertices of the comparability graphs $G_P$ and $G_Q$.
Finally, in Section \ref{sec: proof}, after studying the $3$-dimensional case in detail, we prove Theorem \ref{main theorem}.

\vspace{5mm}

\section{The number of facets of twinned chain polytopes} \label{sec:review of facets}
In the present section, we review the equation of the number of facets of twinned chain polytopes and examine the number of facets based on their geometric structure when these polytopes are associated with elementary posets. 

Let $P =\{p_1,\dots,p_d\}$ and $Q = \{q_1,\ldots,q_d\}$ be posets with $P \cap Q = \emptyset$.
For $W \subset [d]:=\{1,\dots,d\}$, let $(\Delta_W(P,Q), \leq_W)$ be the ordinal sum of $P_W$ and $Q_{\overline{W}}$, where $\overline{W} = [d] \setminus W$.
Note that $|\Delta_W(P,Q)|=d$.
A subset $C \subset P$ is called a {\it chain} if $C$ is a chain as a subposet of $P$.
Note that the empty set is a chain of $P$.
A chain $C$ of $P$ is called {\it maximal} if it is not contained in a larger chain of $P$.
We write $\Mc(P)$ for the set of all maximal chains of $P$.
The following proposition will be frequently used throughout this paper.

\begin{prop}[{\cite[Theorem 3.2]{T}}]\label{facets}
Let $P =\{p_1,\dots,p_d\}$ and $Q = \{q_1,\ldots,q_d\}$ be posets with $P \cap Q = \emptyset$. 
Then
$$
N(\Gamma(P, Q))
=
|\bigcup_{W \subset [d]} \Mc(\Delta_W (P, Q)) |.$$
\end{prop}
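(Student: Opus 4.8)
The plan is to read the facets of $\Gamma(P,Q)=\textrm{conv}(\Cc(P)\cup(-\Cc(Q)))$ off its orthant-by-orthant (locally anti-blocking) structure and to match them with the maximal chains of the ordinal sums $\Delta_W(P,Q)$. Recall that $\Cc(P)$ is an anti-blocking polytope with facet description $\Cc(P)=\{x\in\RR^d_{\ge 0}: \sum_{i\in C}x_i\le 1 \text{ for all } C\in\Mc(P)\}$, so its only facets are the coordinate facets $x_i=0$ and the maximal-chain facets with normal $\sum_{i\in C}\eb_i$, $C\in\Mc(P)$; the same holds for $\Cc(Q)$.

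First I would fix a sign pattern and record the local shape of $\Gamma(P,Q)$. For $W\subseteq[d]$ let $\RR^d_W=\{x\in\RR^d: x_i\ge 0\ (i\in W),\ x_i\le 0\ (i\in\overline{W})\}$ and let $\delta_W$ be the reflection $x\mapsto(\varepsilon_i x_i)_i$ with $\varepsilon_i=1$ for $i\in W$ and $\varepsilon_i=-1$ otherwise. The crucial claim is
\[
\Gamma(P,Q)\cap\RR^d_W=\delta_W\bigl(\Cc(\Delta_W(P,Q))\bigr),
\]
i.e.\ after folding this orthant onto $\RR^d_{\ge 0}$ one sees exactly the chain polytope of the ordinal sum $\Delta_W(P,Q)=P_W\oplus Q_{\overline{W}}$. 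This is where the down-closedness (anti-blocking property) of $\Cc(P)$ and $\Cc(Q)$ is used: a point of $\Gamma(P,Q)$ in this orthant is a convex combination of a point of $\Cc(P)$ and a point of $-\Cc(Q)$, and truncating each to the coordinates it actually contributes (the $W$-coordinates from $\Cc(P)$, the $\overline{W}$-coordinates from $-\Cc(Q)$) keeps it inside the respective chain polytope; conversely the maximal-chain inequalities of $P_W$ and of $Q_{\overline{W}}$ combine into those of the ordinal sum, since every maximal chain of $\Delta_W(P,Q)$ is a maximal chain of $P_W$ followed by a maximal chain of $Q_{\overline{W}}$.

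Next I would read the facets off the local pieces. By the previous step the non-coordinate facets of $\Gamma(P,Q)\cap\RR^d_W$ are indexed by $C\in\Mc(\Delta_W(P,Q))$, where $C$ has $p$-index set $C_P\in\Mc(P_W)$ and $q$-index set $C_Q\in\Mc(Q_{\overline{W}})$, yielding the inequality $\sum_{i\in C_P}x_i-\sum_{j\in C_Q}x_j\le 1$ with normal $a_C=\sum_{i\in C_P}\eb_i-\sum_{j\in C_Q}\eb_j\in\{0,\pm 1\}^d$. This inequality is globally valid on $\Gamma(P,Q)$ (on $\Cc(P)$ the negative part only helps and $\sum_{i\in C_P}x_i\le 1$ as $C_P$ is a chain of $P$; symmetrically on $-\Cc(Q)$), and it is tight on a $(d-1)$-dimensional subset of $\Gamma(P,Q)\cap\RR^d_W$, so it defines a genuine facet of $\Gamma(P,Q)$. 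Conversely every facet of $\Gamma(P,Q)$ meets the interior of some orthant and therefore restricts to a non-coordinate facet there, so all facets arise this way. The coordinate hyperplanes $x_i=0$ are discarded: they are interior walls, not facets, because the origin lies in the interior of $\Gamma(P,Q)$ and the polytope extends in both directions along each axis.

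Finally I would count without double-counting. The assignment $C\mapsto a_C$ is injective, since $C_P$ and $C_Q$ are recovered as the $(+1)$- and $(-1)$-supports of $a_C$, so distinct chains give distinct facet normals and hence distinct facets. A single normal $a_C$ may be produced by several $W$ (indices outside $C_P\cup C_Q$ may be placed on either side), but in $\bigcup_{W\subseteq[d]}\Mc(\Delta_W(P,Q))$ the underlying subset $C\subseteq P\cup Q$ is one element of the union no matter how many $\Delta_W$ contain it as a maximal chain. Thus the facets of $\Gamma(P,Q)$ are in bijection with the distinct elements of $\bigcup_W\Mc(\Delta_W(P,Q))$, giving $N(\Gamma(P,Q))=|\bigcup_{W\subseteq[d]}\Mc(\Delta_W(P,Q))|$. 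I expect the main obstacle to be the orthant identity of the second step: proving that the convex hull, intersected with a given orthant, is exactly the folded chain polytope of the ordinal sum, and in particular that no "mixed" facet with a normal outside $\{0,\pm 1\}^d$ can occur.
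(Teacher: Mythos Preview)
The paper does not prove this proposition: it is quoted verbatim from \cite[Theorem~3.2]{T} and used as a black box throughout, so there is no in-paper argument to compare your proposal against.

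That said, your outline follows exactly the line one would expect from the surrounding literature. The paper itself notes (in the introduction) that $\Gamma(\Cc(P),\Cc(Q))$ is locally anti-blocking, and your key orthant identity
\[
\Gamma(P,Q)\cap\RR^d_W=\delta_W\bigl(\Cc(\Delta_W(P,Q))\bigr)
\]
is precisely the content of that statement for twinned chain polytopes. Once this is established, Stanley's facet description of chain polytopes, together with the observation that the coordinate hyperplanes pass through the interior, does yield the facet count as the number of distinct maximal chains in $\bigcup_W\Mc(\Delta_W(P,Q))$; your handling of the potential overcounting (one chain $C$ arising from several $W$) is correct, since the union is taken as a set of subsets of $P\cup Q$.

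You are right to flag the orthant identity as the substantive step. The ``$\supseteq$'' direction is immediate; for ``$\subseteq$'' one has to argue that any convex combination of antichain vectors from $\Cc(P)$ and $-\Cc(Q)$ landing in $\RR^d_W$ can be rewritten using only antichains of $P_W$ and $Q_{\overline{W}}$. This is where the down-closedness of $\Cc(P)$ and $\Cc(Q)$ along coordinate directions is genuinely needed, and your sketch (truncate each summand to the coordinates of the correct sign) is the right mechanism, though it would need to be made precise to stand as a full proof.
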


\begin{remark}\label{label}
Maximal chains of a poset $P$ correspond to the maximal cliques of 
the comparability graph $G_P$ of $P$.
From Proposition \ref{facets}, $N(\Gamma(P, Q))=N(\Gamma(P', Q))$ if 
the map $\varphi: P=\{p_1,\ldots,p_d\} \rightarrow P'=\{p_1',\ldots,p_d'\}$ defined by $\varphi(p_i) = p_i'$ induces a graph isomorphism from $G_P$ to $G_{P'}$.
\end{remark}

The following example shows that a permutation of the labels of the vertices of $G_Q$ does not necessarily preserve $N(\Gamma(P, Q))$.

\begin{example}\label{rei}
     Let $(P, \le_P)$, $(Q, \le_Q)$ be posets on $P=\{p_1,p_2,p_3\}$ and $Q=\{q_1,q_2,q_3\}$ that are isomorphic to ${\bf 1} \oplus {\bf I}_2$.
    Then $G_P$ and $G_Q$ are paths with three vertices.
    Let $G_P$ be the path $(p_1,p_3,p_2)$.
    Then 
$G_Q$ is one of 
$(q_1,q_3,q_2)$,
$(q_1,q_2,q_3)$,
or
$(q_2,q_1,q_3)$.
If $G_Q = (q_2,q_1,q_3)$, 
by exchanging $p_1$, $q_1$
with $p_2$, $q_2$, 
this case reduces to the case
$G_Q = (q_1,q_2,q_3)$.
Hence, up to symmetries,
there are two possibilities for $G_Q$
(Figure~\ref{PQ} shows the pairs of posets $P$ and $Q$ for these cases).
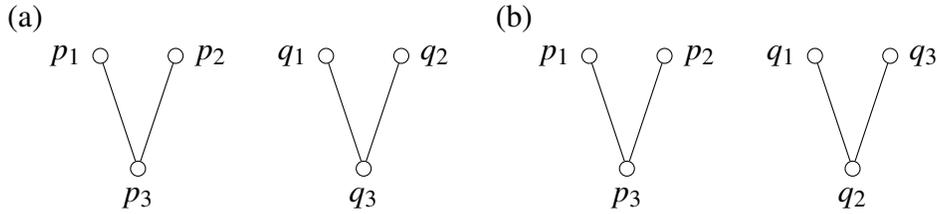
\begin{figure}[ht]
\centering
\begin{tikzpicture}[scale=1]

  \node at (-1, 2.5) {(a)};

  \node[circle, draw, inner sep=2pt, label=left:$p_1$] (p1a) at (0, 2) {};
  \node[circle, draw, inner sep=2pt, label=right:$p_2$] (p2a) at (1, 2) {};
  \node[circle, draw, inner sep=2pt, label=below:$p_3$] (p3a) at (0.5, 0.5) {};
  \draw (p1a) -- (p3a) -- (p2a);

  \node[circle, draw, inner sep=2pt, label=left:$q_1$] (q1a) at (3, 2) {};
  \node[circle, draw, inner sep=2pt, label=right:$q_2$] (q2a) at (4, 2) {};
  \node[circle, draw, inner sep=2pt, label=below:$q_3$] (q3a) at (3.5, 0.5) {};
  \draw (q1a) -- (q3a) -- (q2a);

  \node at (5.5, 2.5) {(b)};

  \node[circle, draw, inner sep=2pt, label=left:$p_1$] (p1b) at (6.5, 2) {};
  \node[circle, draw, inner sep=2pt, label=right:$p_2$] (p2b) at (7.5, 2) {};
  \node[circle, draw, inner sep=2pt, label=below:$p_3$] (p3b) at (7, 0.5) {};
  \draw (p1b) -- (p3b) -- (p2b);

  \node[circle, draw, inner sep=2pt, label=left:$q_1$] (q1b) at (9.5, 2) {};
  \node[circle, draw, inner sep=2pt, label=right:$q_3$] (q3b) at (10.5, 2) {};
  \node[circle, draw, inner sep=2pt, label=below:$q_2$] (q2b) at (10, 0.5) {};
  \draw (q1b) -- (q2b) -- (q3b);
\end{tikzpicture}
\caption{Pairs of posets $P, Q \cong {\bf 1} \oplus {\bf I}_2$ in Example \ref{rei}}
\label{PQ}
\end{figure}

    \begin{itemize}
        \item[(a)]
        If $G_Q=(q_1,q_3,q_2)$,
        then Table~\ref{table:a} is a list of $\Mc(\Delta_W (P, Q))$.
        Each set appears exactly once in the list.
        Hence we have $N(\Gamma(P, Q))=12$.

        \begin{table}[ht]
    \centering
    \scalebox{0.8}{
    \begin{tabular}{|c|c|c|c|c|c|c|c|c|}
    \hline
        $ W$ & $\emptyset$ & $\{1\}$ & $\{2\}$ & $\{3\}$ &  $\{1,2\}$ & $\{1,3\}$ & $\{2,3\}$ & \{1,2,3\}\\
        \hline
        $\Mc(\Delta_W (P, Q)) $ & $\begin{array}{c}
            \{q_1,q_3\} \\
             \{q_2,q_3\}
        \end{array}$ &  
        $\{p_1,q_2,q_3\}$&  
        $\{p_2,q_1,q_3\}$&
        $\begin{array}{c}
            \{p_3,q_1\} \\
             \{p_3,q_2\}
        \end{array}$&
        $\begin{array}{c}
            \{p_1,q_3\} \\
             \{p_2,q_3\}
        \end{array}$&
        $\{p_1,p_3,q_2\}$&
        $\{p_2,p_3,q_1\}$&
        $\begin{array}{c}
            \{p_1,p_3\} \\
             \{p_2,p_3\}
        \end{array}$\\
         \hline
    \end{tabular}
    }
    \smallskip
    \caption{The list of $\Mc(\Delta_W (P, Q))$ in Example \ref{rei} (a)}
    \label{table:a}
\end{table}

        \item[(b)]
        If $G_Q=(q_1,q_2,q_3)$,
        then
        Table~\ref{table:b} is a list of $\Mc(\Delta_W (P, Q))$.
        Note that $\{p_2,q_3\}$ appears twice in the list.
        Hence we have $N(\Gamma(P, Q))=11$.

         \begin{table}[ht]
    \centering
    \scalebox{0.8}{
    \begin{tabular}{|c|c|c|c|c|c|c|c|c|}
    \hline
        $ W$ & $\emptyset$ & $\{1\}$ & $\{2\}$ & $\{3\}$ &  $\{1,2\}$ & $\{1,3\}$ & $\{2,3\}$ & \{1,2,3\}\\
        \hline
        $\Mc(\Delta_W (P, Q)) $ & $\begin{array}{c}
            \{q_1,q_2\} \\
            \{q_2,q_3\}
        \end{array}$ &
         $\{p_1,q_2,q_3\}$&
        $\begin{array}{c}
            \{p_2,q_1\} 
            \\\{p_2,q_3\}
        \end{array}$&
         $\{p_3,q_1,q_2\}$&
        $\begin{array}{c}
            \{p_1,q_3\} \\ 
            \{p_2,q_3\}
        \end{array}$&
        $\{p_1,p_3,q_2\}$&
        $\{p_2,p_3,q_1\}$& 
        $\begin{array}{c}
            \{p_1,p_3\} \\
             \{p_2,p_3\}
        \end{array}$\\
         \hline
    \end{tabular}
    }
        \smallskip
    \caption{The list of $\Mc(\Delta_W (P, Q))$ in Example \ref{rei} (b)}
    \label{table:b}
\end{table}
    \end{itemize}

\end{example}

Let $\Pc \subset \RR^d$ be a $d$-dimensional polytope with $m$ facets, and $\Pc' \subset \RR^{d'}$ be a $d'$-dimensional polytope with $m'$ facets.
Suppose that $\Pc$ and $\Pc'$ have the origin in their interiors.
Then the $(d+d')$-dimensional polytope
\[\Pc \oplus \Pc' = \textrm{conv}(\Pc \times \{0\}^{d'} \cup \{0\}^{d} \times \Pc') \subset \RR^{d+d'}\]
is called the \textit{free sum} of $\Pc$ and $\Pc'$.
It is known that $\Pc \oplus \Pc'$ has $m \cdot m'$ facets (see \cite{HRZ}).

\begin{prop}\label{direct sum}
Let $(P_1, \le_{P_1})$, $(P_2, \le_{P_2})$, $(Q_1, \le_{Q_1})$ and $(Q_2, \le_{Q_2})$ be posets with 
$P_1=\{p_1,\dots,p_s\}$, $P_2 =\{p_{s+1},\dots, p_d\}$, $Q_1 = \{q_1,\dots,q_s\}$, and $Q_2 =\{q_{s+1},\dots,q_d\}$.
Then 
$$
N(\Gamma ( P_1 \oplus P_2, Q_1 \oplus Q_2 ))
=
N(\Gamma(P_1, Q_1)) 
\cdot
N(\Gamma(P_2, Q_2)) 
.
$$
In particular, if $N(\Gamma(P_1, Q_1)) \le 6^{\frac{s}{2}}$ and $N(\Gamma(P_2, Q_2)) \le 6^{\frac{d-s}{2}}$, then 
$$
N(\Gamma(P_1 \oplus P_2, Q_1 \oplus Q_2)) \le 6^{\frac{d}{2}}
.$$
\end{prop}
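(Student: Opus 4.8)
The plan is to show that the ordinal sum on the poset side corresponds to the direct sum on the polytope side, namely that $\Gamma(\Cc(P_1 \oplus P_2), \Cc(Q_1 \oplus Q_2))$ is unimodularly equivalent (or at least combinatorially equivalent as far as facet count goes) to $\Gamma(\Cc(P_1), \Cc(Q_1)) \oplus \Gamma(\Cc(P_2), \Cc(Q_2))$. Granting this, the first assertion follows immediately from the cited fact that $\Pc \oplus \Pc'$ has $m \cdot m'$ facets when $\Pc$ has $m$ and $\Pc'$ has $m'$ (and from the fact, stated in the excerpt, that twinned chain polytopes are reflexive, hence contain the origin in their interior, so the direct sum is defined). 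The second assertion is then a one-line consequence: $N(\Gamma(P_1 \oplus P_2, Q_1 \oplus Q_2)) = N(\Gamma(P_1, Q_1)) \cdot N(\Gamma(P_2, Q_2)) \le 6^{s/2} \cdot 6^{(d-s)/2} = 6^{d/2}$.

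Rather than establishing the polytope-level direct sum decomposition directly, I would instead argue entirely combinatorially via Proposition~\ref{facets}, which expresses $N(\Gamma(P,Q))$ as $|\bigcup_{W \subset [d]} \Mc(\Delta_W(P,Q))|$. The key structural observation is this: write $[d] = [s] \cup \{s+1, \dots, d\}$, and for $W \subset [d]$ write $W = W_1 \cup W_2$ with $W_1 = W \cap [s]$ and $W_2 = W \cap \{s+1,\dots,d\}$. Then $\Delta_W(P_1 \oplus P_2, Q_1 \oplus Q_2)$ is the ordinal sum of the induced subposet of $P_1 \oplus P_2$ on $W$ with the induced subposet of $Q_1 \oplus Q_2$ on $\overline W$. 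Because $P_1 \oplus P_2$ places every element of $P_1$ below every element of $P_2$ (and similarly for the $Q$'s), and because ordinal sum is associative, this whole poset decomposes as an ordinal sum of four blocks in a fixed order: (elements of $P_1$ in $W_1$), (elements of $P_2$ in $W_2$), (elements of $Q_1$ in $\overline{W_1}$), (elements of $Q_2$ in $\overline{W_2}$) — wait, one must be careful about the interleaving of the $Q$-blocks relative to the $P$-blocks, since $\Delta_W$ puts all of $P_W$ below all of $Q_{\overline W}$. Concretely $\Delta_W(P_1\oplus P_2, Q_1\oplus Q_2) = (P_1)_{W_1} \oplus (P_2)_{W_2} \oplus (Q_1)_{\overline{W_1}} \oplus (Q_2)_{\overline{W_2}}$, which, regrouping, equals $\Delta_{W_1}(P_1, Q_1) \oplus \Delta_{W_2}(P_2, Q_2)$ with the caveat that this reordering $(P_2)_{W_2}$ past $(Q_1)_{\overline{W_1}}$ must be justified — and indeed it is, because maximal chains of an ordinal sum $R \oplus S$ are exactly the concatenations $C \cup D$ with $C$ maximal in $R$ and $D$ maximal in $S$, and a maximal chain never "sees" the internal order of the blocks, only which elements it picks from each block.

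Thus $\Mc(\Delta_W(P_1 \oplus P_2, Q_1 \oplus Q_2))$ is exactly the set of sets of the form $C_1 \cup C_2$ where $C_1 \in \Mc(\Delta_{W_1}(P_1, Q_1))$ and $C_2 \in \Mc(\Delta_{W_2}(P_2, Q_2))$; moreover such a $C_1$ uses only elements indexed in $[s]$ and such a $C_2$ only elements indexed in $\{s+1,\dots,d\}$, so the union $C_1 \cup C_2$ determines the pair $(C_1, C_2)$ uniquely. Taking the union over all $W \subset [d]$, i.e. over all pairs $(W_1, W_2)$ with $W_1 \subset [s]$, $W_2 \subset \{s+1,\dots,d\}$, we get
\[
\bigcup_{W \subset [d]} \Mc(\Delta_W(P_1 \oplus P_2, Q_1 \oplus Q_2)) \;=\; \Bigl\{\, C_1 \cup C_2 : C_1 \in \bigcup_{W_1 \subset [s]} \Mc(\Delta_{W_1}(P_1,Q_1)),\ C_2 \in \bigcup_{W_2} \Mc(\Delta_{W_2}(P_2,Q_2)) \,\Bigr\},
\]
and since the right-hand side is a Cartesian-product-like family in bijection with the product of the two unions, its cardinality is the product of the two cardinalities. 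By Proposition~\ref{facets} applied three times, this is precisely $N(\Gamma(P_1 \oplus P_2, Q_1 \oplus Q_2)) = N(\Gamma(P_1, Q_1)) \cdot N(\Gamma(P_2, Q_2))$.

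The main obstacle, and the point requiring genuine care, is verifying the identification $\Delta_W(P_1 \oplus P_2, Q_1 \oplus Q_2) \cong \Delta_{W_1}(P_1, Q_1) \oplus \Delta_{W_2}(P_2, Q_2)$ at the level of \emph{maximal chains} — specifically, checking that the block $(Q_1)_{\overline{W_1}}$ sitting "between" the two halves does not create or destroy any maximal chains compared to the naively reordered version, and that no maximal chain of $\Delta_W$ is accidentally counted in $\Mc(\Delta_{W'})$ for a different $W'$ in a way that spoils the bijection. Both points are handled by the elementary description of maximal chains of an ordinal sum as concatenations of maximal chains of the summands, together with the disjointness of the index sets $[s]$ and $\{s+1,\dots,d\}$, which guarantees that any set $C \subset P \cup Q$ of the relevant form splits canonically. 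Once this bookkeeping is pinned down, the "in particular" clause is immediate from $6^{s/2} \cdot 6^{(d-s)/2} = 6^{d/2}$.
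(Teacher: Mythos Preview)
Your combinatorial argument via Proposition~\ref{facets} is correct. The key point---that the set of maximal chains of a four-fold ordinal sum $A_1 \oplus A_2 \oplus A_3 \oplus A_4$ depends only on the individual $\Mc(A_i)$ and not on the stacking order---cleanly handles the block swap $(P_2)_{W_2} \leftrightarrow (Q_1)_{\overline{W_1}}$, and the disjointness of the index sets $[s]$ and $\{s+1,\dots,d\}$ makes the map $(C_1,C_2)\mapsto C_1\cup C_2$ a genuine bijection onto the union on the left. The paper, however, proceeds along the geometric route you sketched first and then set aside: it invokes \cite[Lemma~7.2]{HKT} to obtain
\[
\Cc(P_1\oplus P_2)\cup(-\Cc(Q_1\oplus Q_2))=(\Cc(P_1)\oplus\Cc(P_2))\cup((-\Cc(Q_1))\oplus(-\Cc(Q_2))),
\]
then takes convex hulls to derive the polytope identity $\Gamma(P_1\oplus P_2,Q_1\oplus Q_2)=\Gamma(P_1,Q_1)\oplus\Gamma(P_2,Q_2)$ and applies the facet-count formula for direct sums. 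Your route is more self-contained (it needs nothing beyond Proposition~\ref{facets}, which the paper already quotes) and more elementary; the paper's route is shorter once the external lemma is granted and yields the stronger polytope-level decomposition, which is what underlies the assertion in the abstract that the extremal twinned chain polytope is a free sum of del Pezzo polygons.
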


\begin{proof}
From \cite[Lemma 7.2]{HKT}, we have 
\begin{equation} \label{ds}
\Cc(P_1 \oplus P_2) \;\cup\; (-\Cc(Q_1 \oplus Q_2))
=
\Cc(P_1) \oplus \Cc(P_2) \;\cup\; (-\Cc(Q_1)) \oplus (-\Cc(Q_2)).
\end{equation}
Taking the convex hull of both sides of (\ref{ds}), we obtain 
\begin{align*}
& \ \ \ \ \; \Gamma ( P_1 \oplus P_2, Q_1 \oplus Q_2 )\\
&=
\textrm{conv} \Bigl(\bigl(\Cc(P_1) \times \{0\}^{d-s} \;\cup\; \{0\}^s \times \Cc(P_2)\bigr)
\;\cup\; 
\bigl((-\Cc(Q_1)) \times \{0\}^{d-s} \;\cup\;
\{0\}^s \times (-\Cc(Q_2))\bigr)\Bigr)\\
&=
\textrm{conv} \Bigl(
\bigl(\Cc(P_1) \;\cup\; (-\Cc(Q_1))\bigr) \times \{0\}^{d-s} 
\;\cup\;
\{0\}^s \times \bigl(\Cc(P_2) \;\cup\; (-\Cc(Q_2))\bigr)
\Bigr)\\
&=
\textrm{conv} \Bigl(\Gamma(P_1, Q_1) \times \{0\}^{d-s} 
\;\cup\;
\{0\}^s \times \Gamma(P_2, Q_2) \Bigr)\\
&=
\Gamma(P_1, Q_1) \oplus \Gamma(P_2, Q_2).
\end{align*}

Thus we have
$$
N(\Gamma(P_1 \oplus P_2, Q_1 \oplus Q_2))
=
N(\Gamma(P_1, Q_1)) \cdot N(\Gamma(P_2, Q_2)).
$$
If $N(\Gamma(P_1, Q_1)) \le 6^{\frac{s}{2}}$ and $N(\Gamma(P_2, Q_2)) \le 6^{\frac{d-s}{2}}$, then
$
N(\Gamma(P_1 \oplus P_2, Q_1 \oplus Q_2)) 
\le 
 6^{\frac{s}{2}} \cdot  6^{\frac{d-s}{2}}
 =6^{\frac{d}{2}}.
$
\end{proof}

Theorem \ref{main theorem} holds for the following posets.

\begin{prop}
\label{CCIIIC}
Let $(P, \le_P)$ and $(Q, \le_Q)$ be posets with $|P| = |Q| = d$.

\begin{itemize}
\item[(1)]
If $P, Q\;\cong\;{\bf C}_d$, then 
$N(\Gamma (P, Q)) = 2^d < 14 \cdot 6^{\frac{d-3}{2}}$.

\item[(2)]
If $P, Q\;\cong\;{\bf I}_d$, then 
$N(\Gamma (P, Q)) = d^2 +d$.
If $d=2$, then $d^2+d=  6^\frac{2}{2}$.
If $d \ne 2$, then $d^2+d < 14 \cdot  6^{\frac{d-3}{2}}$.

\item[(3)]
If $P\;\cong\;{\bf I}_d$, $Q\;\cong\;{\bf C}_d$, then 
$N(\Gamma(P, Q)) = d \cdot 2^{d-1} + 1< 14 \cdot 6^{\frac{d-3}{2}}$.
\end{itemize}

\end{prop}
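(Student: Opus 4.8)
The plan is to compute $N(\Gamma(P,Q))$ in each of the three cases directly from Proposition~\ref{facets}, which expresses the facet number as the cardinality of $\bigcup_{W \subset [d]} \Mc(\Delta_W(P,Q))$, where $\Delta_W(P,Q)$ is the ordinal sum $P_W \oplus Q_{\overline W}$. The key observation throughout is that a maximal chain of an ordinal sum $P_W \oplus Q_{\overline W}$ is precisely the concatenation of a maximal chain of $P_W$ with a maximal chain of $Q_{\overline W}$; so $\Mc(\Delta_W(P,Q)) = \{\, C \cup D : C \in \Mc(P_W),\ D \in \Mc(Q_{\overline W})\,\}$, and each such chain records the data $(W, C, D)$ with $C \subset P_W$, $D \subset Q_{\overline W}$. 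The subtlety — already visible in Example~\ref{rei} — is that distinct triples $(W,C,D)$ can give the same subset of $P \cup Q$, so one must be careful about overcounting when taking the union over $W$.

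For (1), when $P \cong Q \cong {\bf C}_d$, the induced subposet $P_W$ is a chain on $|W|$ elements, so $\Mc(P_W)$ is the single chain $P_W$ itself, and likewise $\Mc(Q_{\overline W}) = \{Q_{\overline W}\}$. Thus $\Mc(\Delta_W(P,Q))$ is the singleton $\{P_W \cup Q_{\overline W}\}$, and as $W$ ranges over all $2^d$ subsets of $[d]$ these sets are pairwise distinct (the intersection with $P$ recovers $W$). Hence $N(\Gamma(P,Q)) = 2^d$. The strict inequality $2^d < 14\cdot 6^{(d-3)/2}$ is then a routine check: at $d=3$ it reads $8 < 14$, and since multiplying $d$ by one step multiplies the left side by $2$ and the right side by $\sqrt 6 > 2$, it persists for all $d \ge 3$ (and for $d=1,2$ one checks $2 < 14/\sqrt 6$ and $4 < 14/6 \cdot 6$ separately, or simply notes these small cases directly).

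For (2), when $P \cong Q \cong {\bf I}_d$, every element is incomparable to every other, so $P_W$ is an antichain and $\Mc(P_W)$ consists of the $|W|$ singletons $\{p_i\}$ for $i \in W$, together with $\{\emptyset\}$ exactly when $W = \emptyset$ (the empty chain is maximal only in the empty poset); similarly for $Q_{\overline W}$. In the ordinal sum $P_W \oplus Q_{\overline W}$ the maximal chains are therefore the pairs $\{p_i, q_j\}$ with $i \in W$, $j \in \overline W$ — when $W \ne \emptyset, [d]$ — while $W = \emptyset$ contributes the singletons $\{q_j\}$ and $W = [d]$ the singletons $\{p_i\}$. Taking the union over all $W$: every pair $\{p_i, q_j\}$ with $i \ne j$ arises (choose any $W$ with $i \in W$, $j \notin W$), contributing $d(d-1)$ two-element sets; every singleton $\{p_i\}$ and $\{q_j\}$ arises, contributing $2d$ one-element sets; and no pair $\{p_i,q_i\}$ can arise since that would need $i \in W \cap \overline W$. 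Hence $N(\Gamma(P,Q)) = d(d-1) + 2d = d^2 + d$. The comparison with $14\cdot 6^{(d-3)/2}$ for $d \ne 2$ is again a finite check at small $d$ followed by the growth argument ($d^2+d$ grows polynomially, the bound exponentially), and $d=2$ gives $6$ exactly.

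For (3), with $P \cong {\bf I}_d$ and $Q \cong {\bf C}_d$: now $\Mc(P_W)$ is the set of singletons $\{p_i\}$, $i \in W$ (plus $\{\emptyset\}$ if $W = \emptyset$), while $\Mc(Q_{\overline W}) = \{Q_{\overline W}\}$ is a single chain. So $\Mc(\Delta_W(P,Q))$ consists of the sets $\{p_i\} \cup Q_{\overline W}$ for $i \in W$ when $W \ne \emptyset$, and the single set $Q_{[d]} = Q$ when $W = \emptyset$. A set of the form $\{p_i\} \cup Q_{\overline W}$ determines $\overline W$ (it is the part lying in $Q$) and hence $W$, and then determines $i$; so as $(W, i)$ ranges over pairs with $\emptyset \ne W \subset [d]$ and $i \in W$, these are all distinct, giving $\sum_{k=1}^{d} k\binom{d}{k} = d\,2^{d-1}$ sets, plus the one extra set $Q$ from $W = \emptyset$, which is not among them (it meets $P$ in nothing). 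Hence $N(\Gamma(P,Q)) = d\,2^{d-1} + 1$, and the strict inequality $d\,2^{d-1}+1 < 14\cdot 6^{(d-3)/2}$ follows once more by checking $d = 1,2,3$ by hand and then comparing growth rates, since $\frac{(d+1)2^d+1}{d\,2^{d-1}+1} \to 2 < \sqrt 6$.

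The only genuinely delicate point is the overcounting bookkeeping in the union over $W$ — i.e.\ verifying in each case that the map sending a triple $(W, C, D)$ to the subset $C \cup D \subset P \cup Q$ is injective on the relevant index set. This is precisely where the structure of ${\bf C}_d$ and ${\bf I}_d$ makes life easy: for ${\bf C}_d$ the chain $C$ is forced to be $P_W$ (so $W$ is recovered as $C \cap P$), and for ${\bf I}_d$ the singleton nature of maximal chains means the $P$-part and $Q$-part of each facet label determine $W$ up to the finitely many boundary cases $W \in \{\emptyset, [d]\}$, which one handles separately. I expect no real obstacle beyond organizing these small case distinctions cleanly; the numerical inequalities are then immediate from the exponential-versus-polynomial (resp.\ $2^d$-versus-$(\sqrt6)^d$) comparison.
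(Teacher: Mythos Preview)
Your proposal is correct. For part (2) it essentially matches the paper's argument (the same three-piece partition of the union by $W=\emptyset$, $W=[d]$, and proper $W$). For parts (1) and (3) you take a more direct route than the paper: the paper obtains $N(\Gamma({\bf C}_d,{\bf C}_d))=2^d$ by invoking Proposition~\ref{direct sum} (writing ${\bf C}_d={\bf 1}\oplus\cdots\oplus{\bf 1}$ and using multiplicativity of facet numbers under direct sum), and for (3) it simply cites \cite[Example~3.5]{T} for the formula $d\cdot 2^{d-1}+1$. Your enumeration straight from Proposition~\ref{facets} is self-contained and arguably cleaner, since it avoids the external citation and does not depend on the direct-sum machinery.

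One small caution on the numerical inequality in (3): the ratio $\dfrac{(d+1)2^d+1}{d\,2^{d-1}+1}$ does tend to $2<\sqrt 6$, but for $d=3$ and $d=4$ it actually exceeds $\sqrt 6$ (at $d=3$ it is $33/13\approx 2.54$), so the induction step only works from $d\ge 5$ onward; you must verify $d=1,\dots,5$ by hand, not just $d=1,2,3$. The paper handles this via the explicit estimate $2+\tfrac{2}{d}<\sqrt 6$ for $d\ge 5$. This is exactly the kind of bookkeeping you already flagged as routine, so it is not a structural gap.
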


\begin{proof}
(1)
From Proposition \ref{direct sum}, we have
$$
N(\Gamma ({\bf C}_d , {\bf C}_d ))
=
N(\Gamma ({\bf 1} \oplus \cdots \oplus {\bf 1}, {\bf 1} \oplus \cdots \oplus {\bf 1}))
=
N(\Gamma ({\bf 1}, {\bf 1}))^d = 2^d.
$$
Moreover 
$2^d = 2 \cdot 2^{d-1} < \frac{7}{3} \cdot \sqrt{6}^{d-1} =  14 \cdot  6^{\frac{d-3}{2}}$.

(2)
We define the partition 
$$
\bigcup_{W \subset [d]} \Mc(\Delta_W(P, Q)) = \Mc_1 \sqcup \Mc_2 \sqcup \Mc_3,
$$
where
$$
\Mc_1 = \Mc(\Delta_{\emptyset}(P, Q)), \quad
\Mc_2= \Mc(\Delta_{[d]}(P, Q))\quad \mbox{and} \quad
\Mc_3 = \bigcup_{\emptyset \neq W \subsetneq [d]} \Mc(\Delta_W(P, Q)).
$$
Since $|\Mc_1|=|\Mc_2|=d$ and $|\Mc_3|=d(d-1)$, the assertion follows from Proposition \ref{facets}.
Let $\varphi(d)=d^2+d$ and $\psi(d) = 14 \cdot 6^{\frac{d-3}{2}}$.
Then $\varphi(1) = 2 < 7/3 = \psi(1)$ and $\varphi(2) = 6$.
If $d \ge 3$, then
$$
\frac{\varphi(d+1)}{\varphi(d)} = 1+\frac{2}{d} < \sqrt{6} = \frac{\psi(d+1)}{\psi(d)}.
$$
Since $\varphi(3) = 12 < 14 = \psi(3)$, we have $\varphi(d) < \psi(d)$  for $d \ge 3$.

(3)
It is known \cite[Example 3.5]{T} that $N(\Gamma(P, Q)) = d \cdot 2^{d-1} + 1$.
Let $\varphi(d)=d \cdot 2^{d-1} + 1$ and $\psi(d) = 14 \cdot 6^{\frac{d-3}{2}}$.
It is easy to check that $\varphi(d) < \psi(d)$ for $d \le 5$.
If $d \ge 5$, then 
$$
\frac{\varphi(d+1)}{\varphi(d)} = \frac{(d+1) \cdot 2^d+1}{d \cdot 2^{d-1}+1} 
=
2+\frac{2}{d} - \frac{d+2}{d \left(d \cdot 2^{d-1}+1\right)} <
2+\frac{2}{d}< \sqrt{6} = \frac{\psi(d+1)}{\psi(d)}.
$$
Thus $\varphi(d) < \psi(d)$ for $d \ge 6$.
\end{proof}

In particular, Theorem \ref{main theorem} holds for $d \le 2$.
(Figure~\ref{d2Tw} shows all $\Gamma (P, Q)$ for $d=2$.)

\begin{cor}\label{d2}
Let $(P, \le_P)$ and $(Q, \le_Q)$ be posets with $|P| = |Q| = 2$.
Then we have $N(\Gamma (P, Q)) \le 6$.
The equality holds if and only if $P$
and $Q$ are isomorphic to ${\bf I}_2$.
\end{cor}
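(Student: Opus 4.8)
The plan is to reduce the statement to the three special cases already settled in Proposition~\ref{CCIIIC}. Up to order isomorphism there are exactly two posets on a $2$-element set: the chain ${\bf C}_2 = {\bf 1}\oplus{\bf 1}$, in which the two elements are comparable, and the antichain ${\bf I}_2$, in which they are not. Thus, up to order isomorphism, the pair $(P,Q)$ is one of $({\bf C}_2,{\bf C}_2)$, $({\bf I}_2,{\bf I}_2)$, $({\bf I}_2,{\bf C}_2)$, $({\bf C}_2,{\bf I}_2)$; and since $\Gamma(Q,P) = -\Gamma(P,Q)$, the polytopes attached to the last two pairs have the same number of facets, so only three cases remain.

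Next I would read off the facet numbers from Proposition~\ref{CCIIIC} with $d = 2$. If $P,Q\cong{\bf C}_2$, part (1) gives $N(\Gamma(P,Q)) = 2^{2} = 4$. If $P\cong{\bf I}_2$ and $Q\cong{\bf C}_2$ (hence also for the reversed pair), part (3) gives $N(\Gamma(P,Q)) = 2\cdot 2^{2-1} + 1 = 5$. If $P,Q\cong{\bf I}_2$, part (2) gives $N(\Gamma(P,Q)) = 2^{2} + 2 = 6$. Comparing the three values $4$, $5$ and $6$ yields $N(\Gamma(P,Q)) \le 6$, and equality occurs exactly in the last case, i.e.\ exactly when $P$ and $Q$ are both isomorphic to ${\bf I}_2$.

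I do not expect any real obstacle here: once the trivial classification of $2$-element posets is recorded, the corollary is a short case check resting entirely on Proposition~\ref{CCIIIC}. The only step meriting an explicit line is the identity $\Gamma(Q,P) = -\Gamma(P,Q)$, which lets the pair $({\bf C}_2,{\bf I}_2)$ inherit its facet count from $({\bf I}_2,{\bf C}_2)$ rather than being computed separately.
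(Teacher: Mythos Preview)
Your proof is correct and takes essentially the same approach as the paper: the corollary is stated there without an explicit proof, being presented as an immediate consequence of Proposition~\ref{CCIIIC} (the paper simply notes ``In particular, Theorem~\ref{main theorem} holds for $d \le 2$'' and then states the corollary). You have spelled out exactly the case check that this entails, and your use of $\Gamma(Q,P) = -\Gamma(P,Q)$ to reduce $({\bf C}_2,{\bf I}_2)$ to $({\bf I}_2,{\bf C}_2)$ is a clean way to avoid redundancy.
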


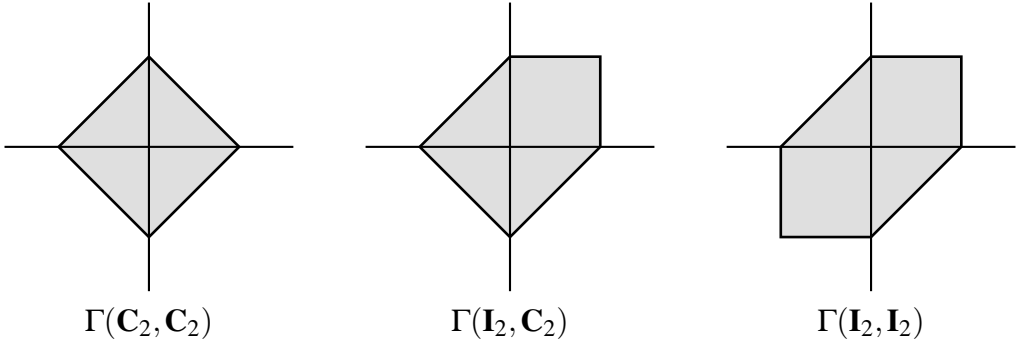
\begin{figure}[ht]
\centering
\begin{tikzpicture}[scale=1.2,
  axis/.style={
    line width=0.8pt,
  },
  poly/.style={line width=1.0pt, draw=black, fill=gray!25}
]
\begin{scope}[shift={(0,0)}]
  \draw[poly] (-1,0) -- (0,1) -- (1,0) -- (0,-1) -- cycle;
  \draw[axis] (-1.6,0) -- (1.6,0);
  \draw[axis] (0,-1.6) -- (0,1.6);
  \node at (0,-1.95) {$\Gamma({\bf C}_2,{\bf C}_2)$};
\end{scope}

\begin{scope}[shift={(4.0,0)}]
  \draw[poly] (-1,0) -- (0,1) -- (1,1) -- (1,0) -- (0,-1) -- cycle;
  \draw[axis] (-1.6,0) -- (1.6,0);
  \draw[axis] (0,-1.6) -- (0,1.6);
  \node at (0,-1.95) {$\Gamma({\bf I}_2,{\bf C}_2)$};
\end{scope}

\begin{scope}[shift={(8.0,0)}]
  \draw[poly] (-1,0) -- (0,1) -- (1,1) -- (1,0) -- (0,-1) -- (-1,-1) -- cycle;
  \draw[axis] (-1.6,0) -- (1.6,0);
  \draw[axis] (0,-1.6) -- (0,1.6);
  \node at (0,-1.95) {$\Gamma({\bf I}_2, {\bf I}_2)$};
\end{scope}
\end{tikzpicture}
\caption{Twinned chain polytopes for $d=2$}
\label{d2Tw}
\end{figure}

\begin{cor}\label{I_2}
Let $(P, \le_P)$ and $(Q, \le_Q)$ be posets with $|P|=|Q|=d$.
If $P,\; Q \cong {\bf I}_2 \oplus \cdots \oplus {\bf I}_2$, and 
$\{p_{2i-1}, p_{2i}\}, \{q_{2i-1}, q_{2i}\}
\cong {\bf I}_2 $ for $1 \le i \le d/2$,
then $N(\Gamma (P, Q)) = 6^{\frac{d}{2}}$.
\end{cor}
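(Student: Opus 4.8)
The plan is a short induction on $d/2$ that peels off one ${\bf I}_2$-block at a time via Proposition~\ref{direct sum}, with the base case $d=2$ supplied by Corollary~\ref{d2}.

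For the base case, when $P,Q \cong {\bf I}_2$ we already know $N(\Gamma(P,Q)) = 2^2+2 = 6 = 6^{2/2}$ from Proposition~\ref{CCIIIC}(2) (equivalently Corollary~\ref{d2}). For the inductive step, set $P_1 = \{p_1,p_2\}$, $P_2 = \{p_3,\dots,p_d\}$, $Q_1 = \{q_1,q_2\}$, and $Q_2 = \{q_3,\dots,q_d\}$. I would first observe that the hypotheses force the non-edges of $G_P$ to be exactly the $d/2$ disjoint pairs $\{p_{2i-1},p_{2i}\}$: since $P \cong {\bf I}_2 \oplus \cdots \oplus {\bf I}_2$, the graph $G_P$ is complete multipartite with all parts of size $2$, hence has precisely $d/2$ non-edges forming a perfect matching, and each $\{p_{2i-1},p_{2i}\}$ is one of them. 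Consequently $G_P$ coincides, as a labelled graph on $[d]$, with the comparability graph of the poset $P_1 \oplus P_2$, where $P_1 \cong {\bf I}_2$ and $P_2 \cong {\bf I}_2 \oplus \cdots \oplus {\bf I}_2$ with ${\bf I}_2$-blocks $\{p_3,p_4\},\dots,\{p_{d-1},p_d\}$; by Remark~\ref{label} we may therefore replace $P$ by this ordinal sum without changing $N(\Gamma(P,Q))$, and similarly replace $Q$ by $Q_1 \oplus Q_2$. Applying Proposition~\ref{direct sum} with $s=2$ then yields
$$
N(\Gamma(P,Q)) = N(\Gamma(P_1,Q_1))\cdot N(\Gamma(P_2,Q_2)) = 6 \cdot 6^{(d-2)/2} = 6^{d/2},
$$
using the base case for the first factor and the induction hypothesis for the second.

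The proof is essentially bookkeeping, and I do not expect any real obstacle. The one step deserving attention is the reduction to an ordinal-sum decomposition compatible with the vertex labelling: a priori the block $\{p_1,p_2\}$ need only be \emph{some} level of $P$ rather than the bottom one, so one cannot claim $P = P_1 \oplus P_2$ on the nose; Remark~\ref{label} is exactly what lets us sidestep this, and it is worth recalling here that, as Example~\ref{rei} shows, the labelling genuinely matters for $N(\Gamma(P,Q))$.
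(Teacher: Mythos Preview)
Your proof is correct and follows essentially the same approach as the paper, which simply states that the result follows from Proposition~\ref{direct sum} and Proposition~\ref{CCIIIC}(2). Your extra care in invoking Remark~\ref{label} to justify the labelled ordinal-sum decomposition is a welcome bit of rigour that the paper leaves implicit.
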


\begin{proof}
It follows from Proposition \ref{direct sum} and Proposition \ref{CCIIIC} (2).
\end{proof}

The following lemma follows from a well-known fact in graph theory.

\begin{lemma}[\cite{MoMo}]
\label{maxchain}
Let $(P, \le_P)$ be a poset with $|P| = d$.
Then
$
|\Mc(P)| \le \lfloor 3^{\frac{d}{3}} \rfloor. 
$
   
\end{lemma}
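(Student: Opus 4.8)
The plan is to reduce the claim to a classical theorem in extremal graph theory. Recall from Remark~\ref{label} that a subset $C \subseteq P$ is a chain of $P$ precisely when its elements are pairwise comparable, that is, when $C$ induces a clique in the comparability graph $G_P$; moreover $C$ is a maximal chain exactly when the associated clique is a maximal clique of $G_P$. Hence $|\Mc(P)|$ equals the number of maximal cliques of $G_P$, and $G_P$ is a graph on $d$ vertices.

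It then suffices to bound the number of maximal cliques of an arbitrary graph on $d$ vertices, which is exactly the content of the Moon--Moser theorem \cite{MoMo}: such a graph has at most $3^{d/3}$, $4 \cdot 3^{(d-4)/3}$, or $2 \cdot 3^{(d-2)/3}$ maximal cliques according as $d \equiv 0$, $1$, or $2 \pmod{3}$. A short computation gives $4 < 3^{4/3}$ and $2 < 3^{2/3}$, so in each residue class the Moon--Moser bound is at most $3^{d/3}$; since $|\Mc(P)|$ is a nonnegative integer, we conclude $|\Mc(P)| \le \lfloor 3^{d/3} \rfloor$.

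The only points requiring attention are the opening reduction, namely verifying that maximality of a chain corresponds to maximality of the associated clique (so that no maximal clique is missed and none is overcounted), together with the elementary inequalities $4 < 3^{4/3}$ and $2 < 3^{2/3}$ needed to fold the three cases of the Moon--Moser bound into the single expression $\lfloor 3^{d/3} \rfloor$. Neither of these presents a genuine obstacle, since the substantive part of the argument is supplied by the cited theorem; accordingly the proof will be very short.
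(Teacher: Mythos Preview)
Your proposal is correct and matches the paper's approach: the paper does not give a proof of this lemma at all, merely introducing it with the remark that it ``follows from a well-known fact in graph theory'' and citing Moon--Moser. Your write-up makes explicit exactly the reduction the paper leaves implicit, namely that maximal chains of $P$ are maximal cliques of $G_P$ and hence are counted by the Moon--Moser bound.
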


From Lemma \ref{maxchain}, one can show that
Theorem \ref{main theorem} holds when $P$ is a chain.

\begin{prop}\label{P=chain}
Let $P=\{p_1,\dots, p_d\}$ and 
$Q=\{q_1,\dots,q_d\}$ be posets with $P \cap Q = \emptyset$.
Suppose that either $\deg_{G_P} p_i =d-1$ or $\deg_{G_Q} q_i =d-1$ for each $i= 1,2,\dots,d$.
Then 
\begin{equation*}
    N(\Gamma (P, Q)) <
    \left\{
    \begin{array}{cl}
    6^{\frac{d}{2}} &  \mbox{if } d \mbox{ is even},\\
    14 \cdot 6^{\frac{d-3}{2}}& \mbox{if } d \mbox{ is odd}.\\
    \end{array}
    \right.
\end{equation*}
\end{prop}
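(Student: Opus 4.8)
The plan is to combine Proposition~\ref{facets} with Lemma~\ref{maxchain} so that the whole estimate collapses onto a single one–variable quantity. Since the maximal chains of an ordinal sum $P_W \oplus Q_{\overline W}$ are exactly the concatenations of a maximal chain of $P_W$ with one of $Q_{\overline W}$, Proposition~\ref{facets} immediately yields the crude bound
\[
N(\Gamma(P,Q)) \;\le\; \sum_{W \subseteq [d]} |\Mc(P_W)| \cdot |\Mc(Q_{\overline W})| .
\]
Put $S = \{\, i : \deg_{G_P} p_i = d-1 \,\}$ and $T = \{\, i : \deg_{G_Q} q_i = d-1 \,\}$; the hypothesis says precisely $S \cup T = [d]$, i.e. $\overline S \cap \overline T = \emptyset$. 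If $i \in S$ then $p_i$ is comparable to every element of $P$, hence lies in every maximal chain of $P_W$ whenever $i \in W$; deleting these ``universal'' elements gives a bijection $\Mc(P_W) \to \Mc(P_{W\cap\overline S})$, so $|\Mc(P_W)| = |\Mc(P_{W \cap \overline S})|$, and symmetrically $|\Mc(Q_{\overline W})| = |\Mc(Q_{\overline W \cap \overline T})|$.

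Now I would exploit $\overline S \cap \overline T = \emptyset$. Splitting each $W$ along the partition $[d] = \overline S \sqcup (S\cap T) \sqcup \overline T$, the summand depends only on $W \cap \overline S$ and $W \cap \overline T$, so the sum decouples into
\[
N(\Gamma(P,Q)) \;\le\; 2^{|S\cap T|}\Bigl(\sum_{A \subseteq \overline S} |\Mc(P_A)|\Bigr)\Bigl(\sum_{B \subseteq \overline T} |\Mc(Q_B)|\Bigr) .
\]
By Lemma~\ref{maxchain}, $\sum_{A \subseteq X} |\Mc(R_A)| \le g(|X|)$ for any poset $R$, where $g(n) := \sum_{k=0}^{n}\binom{n}{k}\lfloor 3^{k/3}\rfloor$. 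Two elementary facts about $g$ finish the reduction: (i) since $k \mapsto \lfloor 3^{k/3}\rfloor$ is non-decreasing, $g(m) \ge 2g(m-1)$, whence $2^{s} \le g(s)$ for all $s \ge 0$; and (ii) $g$ is supermultiplicative, $g(a)g(b) \le g(a+b)$, which follows from the Vandermonde identity together with $\lfloor xy\rfloor \ge \lfloor x\rfloor\lfloor y\rfloor$ applied to $x=3^{a/3}$, $y=3^{b/3}$. Writing $s_0 = |S\cap T|$, $a = |\overline S|$, $b = |\overline T|$, so that $s_0 + a + b = d$, these give
\[
N(\Gamma(P,Q)) \;\le\; 2^{s_0} g(a) g(b) \;\le\; g(s_0)g(a)g(b) \;\le\; g(d) .
\]

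It then remains to verify $g(d) < 6^{d/2}$ for $d$ even and $g(d) < 14\cdot 6^{(d-3)/2}$ for $d$ odd. For every $d \ge 1$ one has $g(d) < \sum_k \binom{d}{k} 3^{k/3} = (1+3^{1/3})^d$, and since $(1+3^{1/3})^2 = 1 + 2\cdot 3^{1/3} + 3^{2/3} < 6$ we have $1+3^{1/3} < \sqrt6$ and hence $g(d) < 6^{d/2}$, settling the even case. For odd $d$ the same estimate gives $g(d) < (1+3^{1/3})^d < 14\cdot 6^{(d-3)/2}$ as soon as $d \ge 17$, since $(1+3^{1/3})/\sqrt6 < 1$ makes the exponential gap eventually outweigh the constant $14/6^{3/2} \approx 0.95$; for the remaining odd $d$ with $1 \le d \le 15$ one checks the inequality directly from the explicit values $g(1)=2$, $g(3)=13$, $g(5)=82$, $g(7)=496$, $g(9)=2971$, $g(11)=17756$, $g(13)=106522$, $g(15)=640651$. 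The heart of the argument is the chain of reductions in the first two paragraphs — noticing that the full–degree hypothesis is exactly $\overline S \cap \overline T = \emptyset$, factoring the universal elements out of each $|\Mc(P_W)|$, and observing that the double sum then separates — after which everything reduces to the single quantity $g(d)$; I expect the most delicate point to be the small odd $d$, where the crude bound $(1+3^{1/3})^d$ does not suffice (already $(1+3^{1/3})^3 \approx 14.57 > 14$), so the sharper consolidation $2^{s_0}g(a)g(b) \le g(d)$ via the supermultiplicativity of $g$ is genuinely needed.
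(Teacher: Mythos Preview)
Your proof is correct and follows essentially the same route as the paper's: both strip out the full-degree (universal) elements so that $|\Mc(P_W)|$ depends only on $W\cap\overline S$ (respectively $|\Mc(Q_{\overline W})|$ on $\overline W\cap\overline T$), decouple the resulting sum into a product, bound each factor by Moon--Moser to obtain products of the function $g(n)=\sum_k\binom{n}{k}\lfloor 3^{k/3}\rfloor$, and then compare $g(d)$ to the stated thresholds using $(1+3^{1/3})<\sqrt6$ together with the explicit values of $g$ for odd $d\le 15$. The only noteworthy difference is cosmetic: the paper relabels so that $[m]\subset S$ and $\{m+1,\dots,d\}\subset T$ and obtains the exact identity~(\ref{Nchain}), whereas you keep the three-block partition $\overline S\sqcup(S\cap T)\sqcup\overline T$ and carry the factor $2^{|S\cap T|}$, absorbing it via $2^{s_0}\le g(s_0)$; you also give a clean general proof of the supermultiplicativity $g(a)g(b)\le g(a+b)$ via Vandermonde and $\lfloor x\rfloor\lfloor y\rfloor\le\lfloor xy\rfloor$, which the paper leaves to explicit verification.
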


\begin{proof}
We may assume that $d \ge 3$.
Suppose that
$\deg_{G_P} p_i =d-1$ for $i =1,2,\dots,m$
and  $\deg_{G_Q} q_j =d-1$ for $j =m+1,m+2,\dots,d$.
Let $W \subset [d]$.
We fix an arbitrary maximal chain $C \in \Mc(\Delta_W (P, Q) )$. 
Since $\deg_{G_P} p_i =d-1$ for any $i \in [m]$, we have $P_{W \cap [m]} \subset C$.
Similarly, since $\deg_{G_Q} q_j =d-1$ for $j =m+1,m+2,\dots,d$,
we have $Q_{\overline{W} \cap \{m+1,\dots,d\}} \subset C$.
On the other hand, 
$$C \setminus ( P_{W \cap [m]} \cup Q_{\overline{W} \cap  \{m+1,\dots,d\}}  )$$
is a maximal chain of 
$$P_{W \cap \{m+1,\dots,d\}} \oplus Q_{\overline{W} \cap [m]}.$$
Therefore, 
\begin{equation}\label{Nchain}
N(\Gamma (P, Q))
=
\sum_{W \subset [d]}
|\Mc(P_{W \cap \{m+1,\dots,d\}})|
|\Mc(Q_{\overline{W} \cap [m]})|.
\end{equation}
Let 
$|\overline{W} \cap [m]| = k$ and $|W \cap \{m+1,\dots,d\}| = \ell$. 
By Lemma~\ref{maxchain}, 
we obtain
$$
N(\Gamma (P, Q))  \le 
\left(\sum_{k=0}^m \binom{m}{k} \lfloor3^\frac{k}{3}\rfloor \right) 
\left(\sum_{\ell=0}^{d-m} \binom{d-m}{\ell} \lfloor3^\frac{\ell}{3}\rfloor \right) 
\le
(1+\sqrt[3]{3})^d 
< 
6^{\frac{d}{2}} 
$$
since $1 + \sqrt[3]{3} < \sqrt{6}$.
Moreover, since $(1+\sqrt[3]{3})^{17}
< 
\frac{7\sqrt{6}}{18} \cdot 6^{\frac{17}{2}}
$, we have  $(1+\sqrt[3]{3})^{d}
< 
\frac{7\sqrt{6}}{18} \cdot 6^{\frac{d}{2}} 
$ for any $d \ge 17$.
For odd $d$ with $3 \le d \le 15$,
one can check
$$
N(\Gamma (P, Q))  
\le
\left(\sum_{k=0}^m \binom{m}{k} \lfloor3^\frac{k}{3}\rfloor \right) 
\left(\sum_{\ell=0}^{d-m} \binom{d-m}{\ell} \lfloor3^\frac{\ell}{3} \rfloor\right) \le 
\sum_{i=0}^d \binom{d}{i} \lfloor3^\frac{i}{3}\rfloor 
<  \frac{7\sqrt{6}}{18} \cdot 6^{\frac{d}{2}}
$$
\begin{table}
    \centering
    \begin{tabular}{|c|c|c|c|c|c|c|c|}
    \hline
$d$ & 3 & 5 & 7 & 9 & 11 & 13 & 15\\
\hline
$\sum_{i=0}^d \binom{d}{i} \lfloor3^\frac{i}{3}\rfloor $ & 13 & 82 & 496 & 2971 & 17756 & 106522 & 640651 \\
\hline
$14 \cdot 6^\frac{d-3}{2} \left(=\frac{7\sqrt{6}}{18} \cdot 6^{\frac{d}{2}}\right) $& 14 & 84 & 504 & 3024 & 18144 & 108864 & 653184 \\
\hline
     \end{tabular}
     \smallskip
    \caption{Numbers appearing in Proof of Proposition \ref{P=chain}}
    \label{tab:placeholder}
\end{table}
through explicit computations (see Table \ref{tab:placeholder}).
\end{proof}

The following corollary follows immediately from equation (\ref{Nchain}).
\begin{cor}\label{Pchainform}
If $P\;\cong\;{\bf C}_d$, then
$$
N(\Gamma (P, Q))
=
\sum_{W \subset [d]} |\Mc(Q_W)|
<
    \left\{
    \begin{array}{cl}
    6^{\frac{d}{2}} &  \mbox{if } d \mbox{ is even},\\
    14 \cdot 6^{\frac{d-3}{2}}& \mbox{if } d \mbox{ is odd}.\\
    \end{array}
    \right.
$$
\end{cor}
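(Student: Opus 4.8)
The plan is to obtain this corollary by specializing the formula (\ref{Nchain})---established in the course of proving Proposition \ref{P=chain}---to the case in which $P$ is itself a chain. Indeed, if $P \cong {\bf C}_d$ then $\deg_{G_P} p_i = d-1$ for every $i \in [d]$, so the hypothesis of Proposition \ref{P=chain} is satisfied; in the notation of that proof we may take $m = d$, which makes the index set $\{m+1,\dots,d\}$ empty.

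First I would substitute $m = d$ into (\ref{Nchain}). Then $W \cap \{m+1,\dots,d\} = \emptyset$, so $P_{W \cap \{m+1,\dots,d\}}$ is the empty poset, whose unique maximal chain is the empty chain; hence $|\Mc(P_{W \cap \{m+1,\dots,d\}})| = 1$. Since also $\overline{W} \cap [m] = \overline{W}$, the formula collapses to
$$
N(\Gamma(P, Q)) = \sum_{W \subset [d]} |\Mc(Q_{\overline{W}})| = \sum_{W \subset [d]} |\Mc(Q_W)|,
$$
where the last equality is just the substitution $W \mapsto \overline{W}$ in the sum. This yields the asserted identity, and the strict inequality is then exactly the bound of Proposition \ref{P=chain} applied to the pair $(P,Q)$ with $P \cong {\bf C}_d$ (the small cases $d \le 2$ being in any event covered by Corollary \ref{d2} and Proposition \ref{CCIIIC}).

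I do not anticipate any real obstacle here: the only subtlety is the convention that the empty poset has exactly one maximal chain, which is precisely what makes the $P$-factor in (\ref{Nchain}) trivialize and leaves a plain sum over the induced subposets of $Q$. Everything else is a direct invocation of results already proved in this section.
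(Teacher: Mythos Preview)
Your proposal is correct and matches the paper's approach exactly: the paper states that the corollary ``follows immediately from equation (\ref{Nchain}),'' and your argument is precisely the specialization $m=d$ of that equation, with the strict inequality read off from Proposition \ref{P=chain}.
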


\section{Degrees of vertices of the comparability graphs}  \label{sec:degree of vertices}

In the present section, we prove several lemmas on 
degrees of vertices of the comparability graphs $G_P$ and $G_Q$.
Lemma \ref{most important} will be frequently used throughout this section.
Let $c(P)$ denote the number of chains in $P$.

\begin{lemma} \label{most important}
Let $P=\{p_1,\dots, p_d\}$ and 
$Q=\{q_1,\dots,q_d\}$ be posets with $P \cap Q = \emptyset$
and let
$$
\Mc = \left\{ C \in \bigcup_{W \subset [d]} \Mc(\Delta_W (P, Q) ) \;:\; p_d \in C \right\}.
$$
If $p_d$ is incomparable with $p_{i_1},\dots,p_{i_s}$ in $P$ 
with $0 \le s \le d-1$,
then we have
\begin{equation} \label{keykeykey}
    |\Mc| \le c(Q_{\{i_1,\dots,i_s\}}) \cdot N(\Gamma (P_{[d-1] \setminus \{i_1,\dots,i_s\}}, Q_{[d-1] \setminus \{i_1,\dots,i_s\}})).
\end{equation}
\end{lemma}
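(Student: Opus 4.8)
The plan is to use the combinatorial description of facets from Proposition~\ref{facets} together with an injection argument. Write $I=\{i_1,\dots,i_s\}\subseteq[d-1]$ and $J=[d-1]\setminus I$; thus $p_d$ is comparable in $P$ with every $p_j$, $j\in J$, and $\Gamma(P_J,Q_J)$ is exactly the polytope on the right of (\ref{keykeykey}). First I would record the shape of a chain $C\in\Mc$. Since $\Delta_W(P,Q)$ is the ordinal sum $P_W\oplus Q_{\overline W}$, a maximal chain $C$ of it splits as $C=C_P\sqcup C_Q$ with $C_P=C\cap P$ a maximal chain of $P_W$ and $C_Q=C\cap Q$ a maximal chain of $Q_{\overline W}$; as $p_d\in C$ lies in $P$ we must have $d\in W$. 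Because $p_d$ is incomparable with every $p_i$, $i\in I$, the chain $C_P$ contains no $p_i$ with $i\in I$, so $C_P\subseteq P_{(W\cap J)\cup\{d\}}$; and since $p_d$ is comparable with every element of $P_{(W\cap J)\cup\{d\}}$ (because $J\cap I=\emptyset$), one checks that $C_P^-:=C_P\setminus\{p_d\}$ is a \emph{maximal} chain of $P_{W\cap J}$. Thus every $C\in\Mc$ decomposes as $C=\{p_d\}\sqcup C_P^-\sqcup E\sqcup C_Q^J$, where $E:=C\cap Q_I$ is a chain of $Q_I$ and $C_Q^J:=C\cap Q_J$.

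Next I would partition $\Mc=\bigsqcup_E \Mc_E$ over chains $E$ of $Q_I$ (of which there are at most $c(Q_I)=c(Q_{\{i_1,\dots,i_s\}})$), reducing the claim to the bound $|\Mc_E|\le N(\Gamma(P_J,Q_J))$ for a fixed $E$. Fix $E$ and put $J_E:=\{\,j\in J : q_j\text{ is comparable in }Q\text{ with every element of }E\,\}$. The key structural observation is that for $C\in\Mc_E$ coming from $W$, maximality of $C_Q$ in $Q_{\overline W}$ forces $C_Q^J$ to be a maximal chain of $Q_{(\overline W\cap J)\cap J_E}$: a $q_j$ with $j\in(\overline W\cap J)\cap J_E$ and $q_j\notin C_Q^J$ is comparable with all of $E$, hence must be incomparable with some element of $C_Q^J$. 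In particular every index occurring in $C_Q^J$ lies in $J_E$.

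Now I would construct an injection $\Phi\colon\Mc_E\hookrightarrow\bigcup_{V\subseteq J}\Mc(\Delta_V(P_J,Q_J))$, whose target has exactly $N(\Gamma(P_J,Q_J))$ elements by Proposition~\ref{facets}. Given $C\in\Mc_E$ from $W$, choose any maximal chain $\widehat C_Q^J$ of $Q_{\overline W\cap J}$ extending $C_Q^J$ and set $\Phi(C):=C_P^-\cup\widehat C_Q^J$. Since $C_P^-$ is a maximal chain of $P_{W\cap J}$ and $\widehat C_Q^J$ a maximal chain of $Q_{J\setminus(W\cap J)}$, their union is a maximal chain of $\Delta_{W\cap J}(P_J,Q_J)$, so $\Phi(C)$ lies in the target. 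For injectivity, the point is that $\widehat C_Q^J$ can contain no $q_j$ with $j\in J_E$ other than those already in $C_Q^J$ — such a $q_j$ would be comparable with all of $C_Q^J$, contradicting maximality of $C_Q^J$ in $Q_{(\overline W\cap J)\cap J_E}$. Hence $\Phi(C)\cap P=C_P^-$ and $\Phi(C)\cap Q_{J_E}=C_Q^J$, and since $J_E$ depends only on $E$, we recover $C=\{p_d\}\sqcup C_P^-\sqcup E\sqcup C_Q^J$ from $\Phi(C)$ and $E$. This yields $|\Mc_E|\le N(\Gamma(P_J,Q_J))$, and summing over $E$ gives (\ref{keykeykey}).

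I expect the injection $\Phi$ to be the main obstacle. The naive guess ``$C\mapsto (C\cap P_J)\cup(C\cap Q_J)$'' fails, because $C\cap Q_J$ is in general \emph{not} a maximal chain of $Q_{\overline W\cap J}$ (only of its restriction $Q_{(\overline W\cap J)\cap J_E}$ to the indices comparable with $E$), so $(C\cap P_J)\cup(C\cap Q_J)$ need not be a facet chain of $\Gamma(P_J,Q_J)$; one is forced to complete the $Q$-part, and the subtle point is that an \emph{arbitrary} completion still yields a well-defined injection, precisely because completing only adds coordinates outside $J_E$, which can be discarded again using $E$. The preliminary claim that $C_P^-$ is already maximal in $P_{W\cap J}$ — the step where the hypothesis that $p_d$ is incomparable exactly with $p_{i_1},\dots,p_{i_s}$ enters — is the other place requiring care.
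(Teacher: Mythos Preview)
Your argument is correct. The paper goes the dual direction: instead of an injection $\Mc \hookrightarrow (\text{chains of } Q_I) \times \Mc'$, it defines a surjection $\varphi$ the other way, sending a pair $(C_1,C')$ to $\{p_d\}\cup C_1\cup (C'\cap P)\cup\{q\in C'\cap Q:\{q\}\cup C_1\text{ is a chain of }Q\}$---in other words, one simply discards from $C'$ the $Q$-part incompatible with $C_1$---and then checks that every $C\in\Mc$ arises this way by taking $C_1=C\cap Q_I$ and extending $(C\cap P_J)\cup(C\cap Q_J)$ to some $C'\in\Mc'$. The surjection avoids your extension step and the auxiliary set $J_E$ altogether, so it is a little quicker to write down; your injection, by contrast, makes explicit exactly which data of $C$ survives (namely $C_P^-$, $E$, and $C_Q^J$), and this decomposition is precisely the viewpoint that recurs in Lemmas~\ref{try}--\ref{st12}, where $\Mc$ is partitioned according to membership of specified elements. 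Both routes rest on the same key observation you flagged at the end, that $C_P^-=(C\cap P)\setminus\{p_d\}$ is already a maximal chain of $P_{W\cap J}$.
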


\begin{proof}
Let 
\begin{eqnarray*}
\Mc' &=& \bigcup_{W \subset [d-1] \setminus \{i_1,\dots,i_s\}} \Mc(\Delta_W (P_{[d-1] \setminus \{i_1,\dots,i_s\}}, Q_{[d-1] \setminus \{i_1,\dots,i_s\}}) ),\\
\Mc'' &=& 
\left\{ (C_1,C') : 
C_1 \mbox{ is a chain of } Q_{\{i_1,\dots,i_s\}},\  
C' \in  \Mc'
\right\}.
\end{eqnarray*}
It is enough to show that there exists a surjective map $\varphi:\Mc'' \rightarrow \Mc$.
Let $(C_1,C') \in \Mc''$.
Then 
$$C' \in \Mc(\Delta_W (P_{[d-1] \setminus \{i_1,\dots,i_s\}}, Q_{[d-1] \setminus \{i_1,\dots,i_s\}}) )$$
for some ${W \subset [d-1] \setminus \{i_1,\dots,i_s\}}$.
Let
$$C =\{p_d\} \sqcup C_1 \sqcup C_2 \sqcup C_3,
$$
where 
\begin{eqnarray*}
C_2 &=& \{ p \in C' \cap P : p \mbox{ is comparable with } p_d\},\\
C_3 &=& \{q \in C' \cap Q : \{q\} \cup C_1 \mbox{ is a chain of } Q\}.
\end{eqnarray*}
Then $C$ belongs to $\Mc(\Delta_{W'} (P, Q))$ where 
$W' = \{d\} \sqcup W \sqcup (\{i_1,\dots,i_s\} \setminus \{i : q_i \in C_1\})$. 
Hence $C \in \Mc$.
We define $\varphi$ by $\varphi((C_1,C'))=C$.

We now show that $\varphi$ is surjective.
Suppose that $C \in \Mc$.
Then $C \in \Mc(\Delta_{W'} (P, Q))$ for some 
$d \in W' \subset [d]$.
Let
\begin{eqnarray*}
    C_1 &:=& C \cap Q_{\{i_1,\dots,i_s\}},\\
    C_2 &:=& (C \cap P) \setminus \{p_d\},\\
    C_3 &:=& C \cap Q_{[d-1] \setminus \{i_1,\dots,i_s\}}.
\end{eqnarray*}
Then $C_1$ is a chain of $Q_{\{i_1,\dots,i_s\}}$, and there exists 
$C'\in \Mc(\Delta_W (P_{[d-1] \setminus \{i_1,\dots,i_s\}}, Q_{[d-1] \setminus \{i_1,\dots,i_s\}}) )$  
 where $W = W' \cap ([d-1] \setminus \{i_1,\dots,i_s\})$
 such that $C_2 \cup C_3 \subset C'$.
It follows that $\varphi((C_1,C'))=C$ and 
hence $\varphi$ is surjective.
Thus we have (\ref{keykeykey}), as desired.
\end{proof}

\begin{lemma} \label{try}
Let $P=\{p_1,\dots, p_d\}$ and 
$Q=\{q_1,\dots,q_d\}$ be posets with $P \cap Q = \emptyset$.
Suppose that $p_d$ is incomparable with $p_{i_1},\dots,p_{i_s}$ in $P$ and
$q_d$ is incomparable with 
$q_{j_1},\dots,q_{j_t}$ in $Q$.
Then we have
\begin{eqnarray} \label{revo}
N(\Gamma (P, Q)) &\le&
N(\Gamma (P_{[d-1]}, Q_{[d-1]}))\\ \notag
&&+
c(Q_{\{i_1,\dots,i_s\}}) \cdot N(\Gamma (P_{[d-1] \setminus \{i_1,\dots,i_s\}}, Q_{[d-1] \setminus \{i_1,\dots,i_s\}}))\\ \notag
&&+
c(P_{\{j_1,\dots,j_t\}}) \cdot N(\Gamma (P_{[d-1] \setminus \{j_1,\dots,j_t\}}, Q_{[d-1] \setminus \{j_1,\dots,j_t\}})).
\end{eqnarray}
In particular, if {\rm (\ref{even odd eq})} holds for any posets with $\le d-1$ elements, 
and 
if $st \ne 0$ and $s+t \ge 4$,
then 
\begin{equation*} 
    N(\Gamma (P, Q)) 
    \left\{
    \begin{array}{ccl}
    < &6^{\frac{d}{2}} &  \mbox{if } d \mbox{ is even},\\
    \le &14 \cdot 6^{\frac{d-3}{2}}& \mbox{if } d \mbox{ is odd}.\\
    \end{array}
    \right.
\end{equation*}
\end{lemma}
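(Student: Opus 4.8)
The plan is to prove the inequality (\ref{revo}) first, using Proposition~\ref{facets} and Lemma~\ref{most important}, and then obtain the ``in particular'' part by feeding (\ref{even odd eq}) for smaller posets into (\ref{revo}) and running a short estimate, split according to the parity of $d$.

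To prove (\ref{revo}): by Proposition~\ref{facets}, $N(\Gamma(P,Q)) = |\mathcal N|$ with $\mathcal N = \bigcup_{W\subseteq[d]}\Mc(\Delta_W(P,Q))$. A maximal chain $C\in\Mc(\Delta_W(P,Q))$ is supported on the ground set $P_W\cup Q_{\overline{W}}$ of the ordinal sum $\Delta_W(P,Q)=P_W\oplus Q_{\overline{W}}$, which contains $p_d$ only if $d\in W$ and $q_d$ only if $d\notin W$; hence no $C$ contains both $p_d$ and $q_d$, and $\mathcal N$ is the disjoint union of $\mathcal N_P=\{C\in\mathcal N: p_d\in C\}$, $\mathcal N_Q=\{C\in\mathcal N: q_d\in C\}$ and $\mathcal N_0=\{C\in\mathcal N: p_d,q_d\notin C\}$. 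Now $\mathcal N_P$ is precisely the set $\Mc$ of Lemma~\ref{most important}, so (\ref{keykeykey}) bounds $|\mathcal N_P|$ by the second term on the right of (\ref{revo}). For $\mathcal N_Q$ one applies Lemma~\ref{most important} with $P$ and $Q$ interchanged: this leaves $\mathcal N$ unchanged (the maximal chains of $Q_W\oplus P_{\overline{W}}$, regarded as subsets, coincide with those of $P_{\overline{W}}\oplus Q_W$), and $N(\Gamma(P',Q'))=N(\Gamma(Q',P'))$ since $\Gamma(\Cc(Q'),\Cc(P'))=-\Gamma(\Cc(P'),\Cc(Q'))$, so $|\mathcal N_Q|$ is at most the third term. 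Finally, if $C\in\mathcal N_0$ is a maximal chain of $P_W\oplus Q_{\overline{W}}$, write $C=C_1\cup C_2$ with $C_1\in\Mc(P_W)$ and $C_2\in\Mc(Q_{\overline{W}})$; since $C_1$ avoids $p_d$ it is already maximal in $P_{W\cap[d-1]}$, and likewise $C_2$ is maximal in $Q_{\overline{W}\cap[d-1]}$, so $C\in\Mc(\Delta_{W\cap[d-1]}(P_{[d-1]},Q_{[d-1]}))$ and $|\mathcal N_0|\le N(\Gamma(P_{[d-1]},Q_{[d-1]}))$. Adding the three estimates gives (\ref{revo}).

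For the ``in particular'' part, assume (\ref{even odd eq}) for all posets on at most $d-1$ elements and that $s,t\ge1$, $s+t\ge4$ (which forces $d\ge3$). The three $N(\Gamma(\cdot,\cdot))$-factors in (\ref{revo}) involve posets on $d-1$, $d-1-s\le d-2$, and $d-1-t\le d-2$ elements, hence can be bounded by (\ref{even odd eq}); moreover $c(Q_{\{i_1,\dots,i_s\}})\le2^s$ and $c(P_{\{j_1,\dots,j_t\}})\le2^t$, since an $r$-element poset has at most $2^r$ chains. Writing $g(r)=2^r\cdot6^{(d-1-r)/2}$ when $d-1-r$ is even and $g(r)=2^r\cdot14\cdot6^{(d-4-r)/2}$ when $d-1-r$ is odd, we have $|\mathcal N_P|\le g(s)$ and $|\mathcal N_Q|\le g(t)$, and a short computation (checking $g(r+1)/g(r)\in\{7/9,\,6/7\}$ according to the parity of $d-1-r$) shows $g$ is strictly decreasing. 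Consequently, among pairs with $s,t\ge1$ and $s+t\ge4$, the quantity $g(s)+g(t)$ is maximized at $(s,t)=(1,3)$ or $(2,2)$ (up to swapping). Substituting the (\ref{even odd eq})-values and keeping track of parities, one obtains: for $d$ even, $(1,3)$ gives $N(\Gamma(P,Q))\le34\cdot6^{(d-4)/2}<6^{d/2}$ and $(2,2)$ gives $N(\Gamma(P,Q))\le196\cdot6^{(d-6)/2}<6^{d/2}$; for $d$ odd, $(1,3)$ gives $N(\Gamma(P,Q))\le496\cdot6^{(d-7)/2}<14\cdot6^{(d-3)/2}$ and $(2,2)$ gives $N(\Gamma(P,Q))\le14\cdot6^{(d-3)/2}$. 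This is the asserted inequality, attained only in the last case.

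The step I expect to require the most care is the combinatorial bookkeeping for (\ref{revo}): verifying that $\mathcal N=\mathcal N_0\sqcup\mathcal N_P\sqcup\mathcal N_Q$, and --- the one genuinely nonobvious point --- the restriction claim for $\mathcal N_0$, namely that a maximal chain of $P_W\oplus Q_{\overline{W}}$ which happens to miss both $p_d$ and $q_d$ remains maximal after cutting the index set down to $[d-1]$. With (\ref{revo}) in hand the concluding estimate is routine; the only delicate feature is that the case $d$ odd with $s=t=2$ is exactly tight, which is precisely the reason the bound $14\cdot6^{(d-3)/2}$ (rather than something smaller) appears on the right-hand side of (\ref{even odd eq}).
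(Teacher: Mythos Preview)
Your proof is correct and follows the same decomposition as the paper: the partition $\mathcal N=\mathcal N_0\sqcup\mathcal N_P\sqcup\mathcal N_Q$ is exactly the paper's $\Mc_1\sqcup\Mc_2\sqcup\Mc_3$, and the three bounds via Lemma~\ref{most important} and the inclusion $\mathcal N_0\subset\bigcup_{W\subset[d-1]}\Mc(\Delta_W(P_{[d-1]},Q_{[d-1]}))$ are identical. The only difference is in the bookkeeping for the ``in particular'' part: the paper first applies the cruder bound $N(\Gamma(\cdot,\cdot))\le 6^{r/2}$ uniformly to obtain $N(\Gamma(P,Q))\le\bigl(1+(2/\sqrt6)^s+(2/\sqrt6)^t\bigr)6^{(d-1)/2}$ and then treats the odd-$d$ case $(s,t)=(1,3)$ separately with the sharper odd bound, whereas your parity-aware function $g(r)$ builds in the correct bound from~(\ref{even odd eq}) at each step and handles all cases uniformly---a slightly cleaner organization that lands on the same tight case $s=t=2$ for odd $d$.
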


\begin{proof}
We define a partition 
$
\bigcup_{W \subset [d]} \Mc(\Delta_W (P, Q))  = \Mc_1 \sqcup \Mc_2 \sqcup \Mc_3
$
where
\begin{eqnarray*}
\Mc_1 &=& \left\{ C \in \bigcup_{W \subset [d]} \Mc(\Delta_W (P, Q) ) \;:\;
p_d , q_d \notin C\right\},\\
\Mc_2 &=& \left\{ C \in \bigcup_{W \subset [d]} \Mc(\Delta_W (P, Q) ) \;:\; p_d \in C \right\},\\
\Mc_3  &=&   \left\{ C \in \bigcup_{W \subset [d]} \Mc(\Delta_W (P, Q)) \;:\; 
q_d \in C\right\}.
\end{eqnarray*}
It is easy to see that $\Mc_1 \subset \bigcup_{W \subset [d-1]} \Mc(\Delta_W (P_{[d-1]}, Q_{[d-1]}))$ and hence
$$
|\Mc_1| \le N(\Gamma(P_{[d-1]}, Q_{[d-1]})).
$$
From Lemma \ref{most important},
\begin{eqnarray*}
|\Mc_2| 
&\le &
c(Q_{\{i_1,\dots,i_s\}}) \cdot N(\Gamma (P_{[d-1] \setminus \{i_1,\dots,i_s\}}, Q_{[d-1] \setminus \{i_1,\dots,i_s\}})),\\
|\Mc_3|  &\le& c(P_{\{j_1,\dots,j_t\}}) \cdot N(\Gamma (P_{[d-1] \setminus \{j_1,\dots,j_t\}}, Q_{[d-1] \setminus \{j_1,\dots,j_t\}})).
\end{eqnarray*}
Hence we have (\ref{revo}).
Thus
$$    N(\Gamma (P, Q))
\le 
6^\frac{d-1}{2}
+ 
2^s \cdot 6^\frac{d-s-1}{2}
+
2^t \cdot 6^\frac{d-t-1}{2}\\
= 
\left(1+
\left( \frac{2}{\sqrt{6}} \right)^s
+
\left( \frac{2}{\sqrt{6}} \right)^t \ 
\right)
6^{\frac{d-1}{2}}.
$$
Suppose that $st \ne 0$.
Then
$$
1+
\left( \frac{2}{\sqrt{6}} \right)^s
+
\left( \frac{2}{\sqrt{6}} \right)^t
\left\{
\begin{array}{cccrc}
\le& 1+ \frac{2}{\sqrt{6}} +\frac{2}{3} \cdot \frac{2}{\sqrt{6}} 
&< &\sqrt{6} &\mbox{if } s+t \ge 4,\\
\\
\le& 1+ \frac{2}{\sqrt{6}} +\frac{4}{9}
&< &\frac{7\sqrt{6}}{18} \cdot \sqrt{6} &\mbox{if } s+t \ge 5,\\
\\
=&1+ \frac{2}{3}+ \frac{2}{3} & = &\frac{7\sqrt{6}}{18} \cdot \sqrt{6} &\mbox{if } s=t=2.
\end{array}
\right.
$$

If $d$ is odd and $(s,t)=(1,3)$, then 
$$
    N(\Gamma (P, Q))
\le
6^\frac{d-1}{2}
+ 
2 \cdot \frac{7\sqrt{6}}{18} \cdot 6^\frac{d-2}{2}
+
2^3 \cdot\frac{7\sqrt{6}}{18} \cdot 6^\frac{d-4}{2}
=
\left( 
\frac{3}{7}+\frac{1}{3}+\frac{2}{9}
\right) \frac{7\sqrt{6}}{18} \cdot
6^{\frac{d}{2}}=\frac{62}{63} \cdot
\frac{7\sqrt{6}}{18} \cdot
6^{\frac{d}{2}}.
$$
\end{proof}

\begin{lemma}\label{st11}
Let $P=\{p_1,\dots, p_d\}$ and 
$Q=\{q_1,\dots,q_d\}$ be posets with $P \cap Q = \emptyset$.
Suppose that 
\begin{itemize}
    \item 
    $p_d$ is incomparable with $p_i$ and comparable with $p_{i'}$ for all $i' \in [d-1] \setminus \{i\}$,
    \item 
    $q_d$ is incomparable with $q_j$ and comparable with $q_{j'}$ for all $j' \in [d-1] \setminus \{j\}$,
    \item 
    {\rm (\ref{even odd eq})} holds for any posets with $\le d-1$ elements,
\end{itemize}
then 
$N(\Gamma (P , Q ))$ satisfies {\rm (\ref{even odd eq})}.
The equality
$N(\Gamma (P , Q )) = 6^\frac{d}{2}$
holds if and only if 
$d$ is even, 
$i=j$,
$N(\Gamma(P_{[d-1] \setminus \{i\}}, Q_{[d-1] \setminus \{i\}})) = 6^{\frac{d-2}{2}} $, $P= \{p_i, p_d\} \oplus P_{[d-1] \setminus \{i\}}$
and $Q =  \{q_i, q_d\} \oplus Q_{[d-1] \setminus \{i\}}$.
\end{lemma}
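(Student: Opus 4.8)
The plan is to apply Lemma~\ref{try} with $s = t = 1$, using $i_1 = i$ and $j_1 = j$, and then analyze the resulting inequality carefully to pin down when equality can occur. First I would write down the bound from~\eqref{revo}: since $p_d$ is incomparable only with $p_i$ and $q_d$ only with $q_j$, we have $c(Q_{\{i\}}) = c(P_{\{j\}}) = 2$ (the empty chain and the singleton), so
\begin{align*}
N(\Gamma(P,Q)) &\le N(\Gamma(P_{[d-1]}, Q_{[d-1]})) + 2\, N(\Gamma(P_{[d-1]\setminus\{i\}}, Q_{[d-1]\setminus\{i\}})) \\
&\qquad + 2\, N(\Gamma(P_{[d-1]\setminus\{j\}}, Q_{[d-1]\setminus\{j\}})).
\end{align*}
Applying the inductive hypothesis~\eqref{even odd eq} to each of the three twinned chain polytopes on the right (which have $d-1$, $d-2$, $d-2$ elements respectively) gives, when $d$ is even,
\[
N(\Gamma(P,Q)) \le 6^{\frac{d-1}{2}} + 2\cdot 6^{\frac{d-2}{2}} + 2\cdot 6^{\frac{d-2}{2}} = \left(\tfrac{1}{\sqrt6} + \tfrac{2}{3}\right)6^{\frac{d}{2}} < 6^{\frac{d}{2}},
\]
since $1/\sqrt6 + 2/3 \fallingdotseq 0.408 + 0.667 < 1$. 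Wait---that is already strictly less than $6^{d/2}$, which would contradict the claimed equality case. So the subtlety is that the inductive hypothesis for the $(d-1)$-element polytope $\Gamma(P_{[d-1]}, Q_{[d-1]})$ must be applied in its \emph{odd} form: $d-1$ is odd, so $N(\Gamma(P_{[d-1]},Q_{[d-1]})) \le 14\cdot 6^{\frac{d-4}{2}} = \frac{7\sqrt6}{18}\cdot 6^{\frac{d-1}{2}}$, and similarly $d-2$ even gives $6^{\frac{d-2}{2}}$. Then the bound becomes $\left(\frac{7\sqrt6}{18}\cdot\frac{1}{\sqrt6} + \frac23\right)6^{\frac d2} = \left(\frac{7}{18} + \frac{12}{18}\right)6^{\frac d2} = \frac{19}{18}\cdot\frac{\ ?}{}$ --- this overshoots, so in fact the raw Lemma~\ref{try} bound is \emph{not} tight enough, and the equality analysis needs a more refined argument. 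The correct approach: I must not discard information. When $d$ is even and we want $N(\Gamma(P,Q)) = 6^{d/2}$, I would go back to the partition $\Mc_1 \sqcup \Mc_2 \sqcup \Mc_3$ from the proof of Lemma~\ref{try} and argue that each of the three inequalities $|\Mc_k| \le (\text{bound})$ must be close to equality, forcing structural constraints.

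**The equality analysis.** The key observation is that $\Mc_2$ and $\Mc_3$ overlap-controlled estimates are not independent of $\Mc_1$: a chain $C$ with $p_d \in C$ and containing $q_i$ (if $q_i$ is among the coordinates) cannot simultaneously be counted in the "$q_i\in C_1$" branch and force a large $W'$. More precisely, I would show that $|\Mc_2| + |\Mc_3| \le 2\cdot 6^{\frac{d-2}{2}} + 2\cdot 6^{\frac{d-2}{2}}$ can reach $4\cdot 6^{\frac{d-2}{2}}$ only if $N(\Gamma(P_{[d-1]\setminus\{i\}}, Q_{[d-1]\setminus\{i\}})) = 6^{\frac{d-2}{2}}$ \emph{and} $i = j$ (otherwise the two subposets $P_{[d-1]\setminus\{i\}}$ and $P_{[d-1]\setminus\{j\}}$ are genuinely different and at least one falls strictly below its bound by the inductive equality characterization). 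Simultaneously $|\Mc_1| \le N(\Gamma(P_{[d-1]}, Q_{[d-1]}))$ with $d-1$ odd forces $|\Mc_1| \le 14\cdot 6^{\frac{d-4}{2}}$; combining, $6^{d/2} = |\Mc_1|+|\Mc_2|+|\Mc_3| \le 14\cdot 6^{\frac{d-4}{2}} + 4\cdot 6^{\frac{d-2}{2}}$. One checks $14\cdot 6^{\frac{d-4}{2}} + 4\cdot 6^{\frac{d-2}{2}} = \left(\frac{14}{36} + \frac46\right)6^{\frac d2} = \frac{38}{36}\cdot 6^{\frac d2} > 6^{\frac d2}$, so this leaves slack --- meaning equality does \emph{not} immediately pin things down, and I need to squeeze harder. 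The right move is to bound $|\Mc_1|$ better than the crude $N(\Gamma(P_{[d-1]},Q_{[d-1]}))$: in fact $\Mc_1$ is contained in the set of maximal chains over $\Delta_W$ of subposets on $[d-1]$ \emph{that do not extend to a maximal chain through $p_d$ or $q_d$}, and since $p_d$ is comparable with all of $p_1,\dots,p_{d-1}$ except $p_i$, a maximal chain of $\Delta_W(P_{[d-1]}, Q_{[d-1]})$ failing to contain $p_d$ when $p_d$'s "slot" is on the $P$-side is quite restricted. I would make this precise to show $|\Mc_1| \le 2\cdot N(\Gamma(P_{[d-1]\setminus\{i\}}, Q_{[d-1]\setminus\{i\}}))$ when $i=j$, giving total $\le 6\cdot 6^{\frac{d-2}{2}} = 6^{d/2}$ with equality throughout iff $N(\Gamma(P_{[d-1]\setminus\{i\}}, Q_{[d-1]\setminus\{i\}})) = 6^{\frac{d-2}{2}}$ and the ordinal-sum decompositions $P = \{p_i,p_d\}\oplus P_{[d-1]\setminus\{i\}}$, $Q = \{q_i,q_d\}\oplus Q_{[d-1]\setminus\{i\}}$ hold.

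**Verifying the forward direction and the odd case.** For the converse, if $i=j$, $P = \{p_i,p_d\}\oplus P_{[d-1]\setminus\{i\}}$ and $Q = \{q_i,q_d\}\oplus Q_{[d-1]\setminus\{i\}}$ with $N(\Gamma(P_{[d-1]\setminus\{i\}}, Q_{[d-1]\setminus\{i\}})) = 6^{\frac{d-2}{2}}$, then $\{p_i,p_d\} \cong \{q_i,q_d\} \cong {\bf I}_2$ and Proposition~\ref{direct sum} gives $N(\Gamma(P,Q)) = N(\Gamma({\bf I}_2,{\bf I}_2))\cdot 6^{\frac{d-2}{2}} = 6\cdot 6^{\frac{d-2}{2}} = 6^{\frac d2}$; here I use Corollary~\ref{d2} to identify $N(\Gamma({\bf I}_2,{\bf I}_2)) = 6$. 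For odd $d$, the same three-term bound with the inductive hypothesis (now $d-1$ even, $d-2$ odd) gives $N(\Gamma(P,Q)) \le 6^{\frac{d-1}{2}} + 2\cdot\frac{7\sqrt6}{18}6^{\frac{d-3}{2}} + 2\cdot\frac{7\sqrt6}{18}6^{\frac{d-3}{2}} = \left(\sqrt6 + \frac{28\sqrt6}{108}\right)\cdot\frac{1}{\sqrt6}\cdot$ --- I would compute this constant explicitly and check it is $\le 14\cdot 6^{\frac{d-3}{2}}$, i.e.\ $\le \frac{7\sqrt6}{18}\cdot 6^{d/2}$; since $6^{\frac{d-1}{2}} = \sqrt6\cdot 6^{\frac{d}{2}}/6 = 6^{\frac d2}/\sqrt6$ and $\frac{7\sqrt6}{18}\approx 0.95$, the comparison $\frac{1}{\sqrt6} + \frac{4}{3}\cdot\frac{7\sqrt6}{18}\cdot\frac{1}{\sqrt6} = \frac{1}{\sqrt6} + \frac{28}{54} \approx 0.408 + 0.519 = 0.927 < 0.95$ confirms the odd bound with strict inequality (so no odd equality case, consistent with the statement).

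**Main obstacle.** The crux is the refined estimate $|\Mc_1| \le 2\,N(\Gamma(P_{[d-1]\setminus\{i\}}, Q_{[d-1]\setminus\{i\}}))$ in the $i=j$ case: the naive bound $|\Mc_1|\le N(\Gamma(P_{[d-1]},Q_{[d-1]}))$ is too weak. I expect the argument to hinge on the fact that since $p_d$ (resp.\ $q_d$) is comparable with everything in $P_{[d-1]}$ except $p_i$ (resp.\ $q_j$), a maximal chain $C$ over some $\Delta_W$ on $[d-1]$ lies in $\Mc_1$ only if $C$ is "blocked" from extending through both $p_d$ and $q_d$ --- and this blocking, combined with $i=j$, means $C$ effectively lives over a $\Delta_W$ of $P_{[d-1]\setminus\{i\}}$ with a bounded number of ways to attach $p_i$ or $q_i$. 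Making that counting airtight, and handling the case $i\ne j$ (where I claim at least one of the two $(d-2)$-element polytopes is not of the maximal form, hence strictly below $6^{(d-2)/2}$ by the inductive equality characterization), are the two places where care is needed.
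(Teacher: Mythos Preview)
Your raw application of Lemma~\ref{try} with $s=t=1$ is too weak for \emph{both} parities, not just for the equality analysis. For even $d$ it gives $\tfrac{19}{18}\cdot 6^{d/2}$ as you compute; for odd $d$ your numerical check contains an exponent slip --- the inductive bound on $N_i:=N(\Gamma(P_{[d-1]\setminus\{i\}},Q_{[d-1]\setminus\{i\}}))$ when $d-2$ is odd is $\tfrac{7\sqrt6}{18}\cdot 6^{(d-2)/2}$, not $\tfrac{7\sqrt6}{18}\cdot 6^{(d-3)/2}$, and with the correct exponent the three-term sum becomes $\tfrac{46}{3}\cdot 6^{(d-3)/2}>14\cdot 6^{(d-3)/2}$. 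So a sharpening of the bound on $\{C:p_d,q_d\notin C\}$ is needed merely to prove the inequality, and your proposed sharpening only addresses the case $i=j$.

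The missing observation is a single sentence, valid for all $i,j$: if a maximal chain $C\in\Mc(\Delta_W(P,Q))$ contains neither $p_d$ nor $q_d$, then $C$ contains $p_i$ or $q_j$. Indeed, whichever of $p_d,q_d$ lies in $\Delta_W(P,Q)$ is comparable there with every other element except possibly $p_i$ (respectively $q_j$), so maximality of $C$ forces the obstructing element into $C$. The paper uses this to write the full union as $\Mc_1\sqcup\Mc_2\sqcup(\Mc_3\cup\Mc_4)$, where $\Mc_1,\Mc_2$ collect the chains through $p_d,q_d$ and $\Mc_3,\Mc_4$ the chains over $[d-1]$ through $p_i,q_j$. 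Lemma~\ref{most important} then gives $|\Mc_1|\le2N_i$, $|\Mc_2|\le2N_j$, $|\Mc_3|\le N_i$, $|\Mc_4|\le N_j$, so $N(\Gamma(P,Q))\le 3N_i+3N_j$, and the inductive hypothesis turns this into exactly \eqref{even odd eq} in each parity.

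Your equality analysis has two further holes. First, your argument for $i=j$ (``otherwise at least one of $N_i,N_j$ falls strictly below $6^{(d-2)/2}$'') is unjustified --- both could attain the maximum --- and in any case the hypothesis here is only the bound \eqref{even odd eq}, not the equality characterization of Theorem~\ref{main theorem}. The paper instead deduces $i=j$ from the requirement $\Mc_3\cap\Mc_4=\emptyset$: if $i\ne j$, take $W=\{i\}\subset[d-1]$ and any maximal chain of $Q_{[d-1]\setminus\{i\}}$ through $q_j$ to produce an element of the intersection. Second, you offer no argument for the ordinal-sum decomposition. The paper obtains it by one more use of Lemma~\ref{most important}: if $p_i$ were incomparable with some $p_k$ in $P_{[d-1]}$, then $|\Mc_3|\le 2\cdot\tfrac{7\sqrt6}{18}\cdot 6^{(d-3)/2}=\tfrac{7}{9}\cdot 6^{(d-2)/2}<6^{(d-2)/2}$, contradicting $|\Mc_3|=N_i=6^{(d-2)/2}$.
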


\begin{proof}
     We define 
\begin{eqnarray*}
\Mc_1 &=& \left\{ C \in \bigcup_{W \subset [d]} \Mc(\Delta_W (P, Q) ) \;:\;
p_d \in C\right\},\\
\Mc_2 &=& \left\{ C \in \bigcup_{W \subset [d]} \Mc(\Delta_W (P, Q) ) \;:\;
q_d \in C\right\},\\
\Mc_3 &=& \left\{ C \in \bigcup_{W \subset [d]} \Mc(\Delta_W (P, Q) ) \;:\;
p_i \in C, q_d \notin C\right\}\\
&=&
\left\{ C \in \bigcup_{W \subset [d-1]} \Mc(\Delta_W (P_{[d-1]}, Q_{[d-1]}) ) \;:\;
p_i \in C\right\},\\
\Mc_4 &=& \left\{ C \in \bigcup_{W \subset [d]} \Mc(\Delta_W (P, Q) ) \;:\;
q_j \in C, p_d \notin C\right\}\\
&=& \left\{ C \in \bigcup_{W \subset [d-1]} \Mc(\Delta_W (P_{[d-1]}, Q_{[d-1]}) ) \;:\;
q_j \in C \right\}.
\end{eqnarray*}
For any $C \in \bigcup_{W \subset [d]} \Mc(\Delta_W (P, Q) ) $, if $p_d , q_d \notin C$, then either $p_i$ or $q_j$ belongs to $C$.
Hence
$$\bigcup_{W \subset [d]} \Mc(\Delta_W (P, Q))  = \Mc_1 \sqcup \Mc_2 \sqcup 
(\Mc_3 \cup \Mc_4).$$
From Lemma \ref{most important},
\begin{eqnarray}\label{mmmm}
    |\Mc_1| & \le & 2 \cdot N(\Gamma(P_{[d-1] \setminus \{i\}}, Q_{[d-1] \setminus \{i\}})),\\ \notag
    |\Mc_2| & \le & 2 \cdot N(\Gamma(P_{[d-1] \setminus \{j\}}, Q_{[d-1] \setminus \{j\}})),\\ \notag
    |\Mc_3| & \le & N(\Gamma(P_{[d-1] \setminus \{i\}}, Q_{[d-1] \setminus \{i\}})),\\ \notag
    |\Mc_4| & \le & N(\Gamma(P_{[d-1] \setminus \{j\}}, Q_{[d-1] \setminus \{j\}})).    \notag
\end{eqnarray}
Thus we have 
\begin{eqnarray} \label{11eq}
N(\Gamma(P, Q)) &\le& 3 \cdot  N(\Gamma(P_{[d-1] \setminus \{i\}}, Q_{[d-1] \setminus \{i\}}))+ 3 \cdot N(\Gamma(P_{[d-1] \setminus \{j\}}, Q_{[d-1] \setminus \{j\}})) \\ \notag
&\le & 
\left\{
\begin{array}{cccl}
6 \cdot 6^\frac{d-2}{2} &=&  6^\frac{d}{2} & \mbox{if } d \mbox{ is even},\\
6 \cdot \frac{7\sqrt{6}}{18} \cdot 6^\frac{d-2}{2} &=& \frac{7\sqrt{6}}{18} \cdot 6^\frac{d}{2} & \mbox{if } d \mbox{ is odd}.
\end{array}\right.
\end{eqnarray}

Suppose that $N(\Gamma(P, Q)) =  6^\frac{d}{2}$.
Then $d$ is even and
\begin{itemize}
    \item 
    The equalities hold for the inequalities in (\ref{mmmm}),
    \item 
    $\Mc_3 \cap \Mc_4 = \emptyset$, and hence $i=j$, and    
    \item 
    $N(\Gamma(P_{[d-1] \setminus \{i\}}, Q_{[d-1] \setminus \{i\}}))=6^\frac{d-2}{2}$.
\end{itemize}
If $p_i$ is incomparable with $p_k$ for some $k \in [d-1]$, 
then 
$$|\Mc_3| \le
2 \cdot N(\Gamma(P_{[d-1] \setminus \{i,k\}}, Q_{[d-1] \setminus \{i,k\}}))
\le 2 \cdot \frac{7\sqrt{6}}{18} \cdot 6^\frac{d-3}{2} = \frac{7}{9} \cdot 6^\frac{d-2}{2}<  6^\frac{d-2}{2}$$
by Lemma \ref{most important}.
This is a contradiction.
Hence $p_i$ is comparable with $p_k$ for any $k \in [d-1]$.
Similarly, $q_j$ $(=q_i)$ is comparable with $q_k$ for any $k \in [d-1]$.
Thus $P= \{p_i, p_d\} \oplus P_{[d-1] \setminus \{i\}}$
and $Q =  \{q_i, q_d\} \oplus Q_{[d-1] \setminus \{i\}}$.
The converse follows from Proposition \ref{direct sum}.
\end{proof}

\begin{lemma}\label{st12}
Let $P=\{p_1,\dots, p_d\}$ and 
$Q=\{q_1,\dots,q_d\}$ be posets with $P \cap Q = \emptyset$.
Suppose that 
\begin{itemize}
    \item 
    $p_d$ is incomparable with $p_i$ and comparable with $p_{i'}$ for all $i' \in [d-1] \setminus \{i\}$,
    \item 
    $q_d$ is incomparable with $q_{j_1}$, $q_{j_2}$ and comparable with $q_{j'}$ for all $j' \in [d-1] \setminus \{j_1,j_2\}$,
    \item 
    {\rm (\ref{even odd eq})} holds for any posets with $\le d-1$ elements,
\end{itemize}
then 
\begin{equation*} 
    N(\Gamma (P, Q)) <
    \left\{
    \begin{array}{cl}
    6^{\frac{d}{2}} &  \mbox{if } d \mbox{ is even},\\
    14 \cdot 6^{\frac{d-3}{2}}& \mbox{if } d \mbox{ is odd}.\\
    \end{array}
    \right.
\end{equation*}
\end{lemma}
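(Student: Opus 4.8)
The plan is to argue by induction, using the hypothesis that (\ref{even odd eq}) holds for all pairs of posets with at most $d-1$ elements; write $\Phi(n)$ for the right-hand side of (\ref{even odd eq}) when the ground sets have $n$ elements, so that $\Phi(n+2)=6\,\Phi(n)$. Imitating the proofs of Lemmas \ref{try} and \ref{st11}, I would partition
\[
\bigcup_{W\subset[d]}\Mc(\Delta_W(P,Q))=\Mc_1\sqcup\Mc_2\sqcup\Mc_3,
\]
where $\Mc_1=\{C:p_d\in C\}$, $\Mc_2=\{C:q_d\in C\}$ and $\Mc_3=\{C:p_d,q_d\notin C\}$. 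This is a genuine partition: $\Delta_W(P,Q)$ contains $p_d$ precisely when $d\in W$ and contains $q_d$ precisely when $d\notin W$, so no maximal chain can contain both.

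Each piece is then bounded via Lemma \ref{most important}, applied to $\Mc_2$ after interchanging $P$ and $Q$ (legitimate because $N(\Gamma(Q',P'))=N(\Gamma(P',Q'))$). Since $p_d$ is incomparable only with $p_i$ and $c(Q_{\{i\}})=2$, since $q_d$ is incomparable only with $q_{j_1},q_{j_2}$, and since (as in the proof of Lemma \ref{try}) $\Mc_3\subseteq\bigcup_{W\subset[d-1]}\Mc(\Delta_W(P_{[d-1]},Q_{[d-1]}))$, we obtain
\[
|\Mc_1|\le 2\,\Phi(d-2),\qquad |\Mc_2|\le c(P_{\{j_1,j_2\}})\,\Phi(d-3),\qquad |\Mc_3|\le\Phi(d-1),
\]
where $c(P_{\{j_1,j_2\}})=3$ if $p_{j_1},p_{j_2}$ are incomparable in $P$ and $c(P_{\{j_1,j_2\}})=4$ if they are comparable.

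Adding these and using $\Phi(n+2)=6\Phi(n)$: if $d$ is even the total is at most $\tfrac{53}{54}\Phi(d)<\Phi(d)$ even with $c(P_{\{j_1,j_2\}})=4$, so the even case is finished; if $d$ is odd and $p_{j_1},p_{j_2}$ are incomparable (so $c(P_{\{j_1,j_2\}})=3$), the total is at most $\tfrac{41}{42}\Phi(d)<\Phi(d)$, so this case is finished too.

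The remaining case — $d$ odd and $p_{j_1},p_{j_2}$ comparable — is the crux and the main obstacle: the crude estimate now gives only $\tfrac{22}{21}\Phi(d)$, which is larger than $\Phi(d)$, so one must recover the missing $\tfrac{1}{21}\Phi(d)=\tfrac23\cdot 6^{(d-3)/2}$ of slack. For this I would carry out a secondary case analysis on the comparability structure of $P$ and $Q$ near $p_d,p_i$ and near $q_d,q_{j_1},q_{j_2}$ (for instance, whether $q_{j_1},q_{j_2}$ are comparable in $Q$, and whether $p_i$ or $q_{j_1},q_{j_2}$ are incomparable with still further elements): when enough additional incomparability is present, one of the three bounds above — most plausibly the one for $|\Mc_2|$, e.g. by splitting $\Mc_2$ according to which of $p_{j_1},p_{j_2}$ lies in the chain and reapplying Lemma \ref{most important}, or by bounding $N(\Gamma(P_{[d-1]\setminus\{j_1,j_2\}},Q_{[d-1]\setminus\{j_1,j_2\}}))$ more tightly via the inductive hypothesis — can be improved enough to absorb the deficit; and in the residual rigid configurations the order relations are forced to the point where $P$ (around $\{p_i,p_d\}\cong{\bf I}_2$) and $Q$ (around $\{q_d,q_{j_1},q_{j_2}\}$) split as ordinal sums, so that Proposition \ref{direct sum} — or a direct computation from Proposition \ref{facets} — reduces the count to strictly smaller or to even-dimensional pieces, exactly as the equality case is disposed of in the proof of Lemma \ref{st11}. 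This rigid case is where the real work of the proof lies; everything else is the bookkeeping and arithmetic sketched above.
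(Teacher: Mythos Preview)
Your arithmetic for $d$ even and for $d$ odd with $p_{j_1},p_{j_2}$ incomparable is correct. The gap is the remaining case ($d$ odd, $p_{j_1},p_{j_2}$ comparable in $P$): you say yourself that ``this rigid case is where the real work of the proof lies'' and then offer only a menu of strategies --- further comparability splits, a sharper bound on $|\Mc_2|$, an ordinal-sum reduction in some unspecified residual configuration --- none of which is actually carried out. In particular your appeal to Proposition~\ref{direct sum} in a putative rigid sub-case is not justified: unlike the equality analysis in Lemma~\ref{st11}, the hypotheses here are asymmetric in $P$ and $Q$, and forcing $p_i$ to be comparable with everything in $P_{[d-1]}$ gives no control whatsoever over the structure of $Q$. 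As written, this case is not proved.

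The paper avoids the bad case altogether by a cleverer partition. The point you miss is that your bound $|\Mc_3|\le N(\Gamma(P_{[d-1]},Q_{[d-1]}))$ is wasteful in a precisely recoverable way: a maximal chain of some $\Delta_W(P,Q)$ with $p_d,q_d\notin C$ must meet $\{p_i,q_{j_1},q_{j_2}\}$, so your $\Mc_3$ is exactly the set of $(d{-}1)$-level maximal chains meeting $\{p_i,q_{j_1},q_{j_2}\}$. Now split your $\Mc_2$ according to whether $p_i\in C$. The sub-piece $\{C:q_d\in C,\ p_i\notin C\}$ corresponds, via $C\leftrightarrow C\setminus\{q_d\}$, to the complementary set of $(d{-}1)$-level chains avoiding all of $p_i,q_{j_1},q_{j_2}$, so that
\[
|\Mc_3|\;+\;|\{C:q_d\in C,\ p_i\notin C\}|\;=\;N(\Gamma(P_{[d-1]},Q_{[d-1]}))
\]
with no slack. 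What remains is your $|\Mc_1|\le 2\,\Phi(d-2)$ together with $|\{C:q_d\in C,\ p_i\in C\}|$; a variant of Lemma~\ref{most important} bounds the latter by $4\,\Phi(d-4)$ when $i\notin\{j_1,j_2\}$ and by $2\,\Phi(d-3)$ when $i\in\{j_1,j_2\}$. In either case the total is strictly below $\Phi(d)$ for both parities, uniformly --- no secondary case analysis on the comparability of $p_{j_1},p_{j_2}$ is ever needed.
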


\begin{proof}  
    We define a partition 
$
\bigcup_{W \subset [d]} \Mc(\Delta_W (P, Q))  = \Mc_1 \sqcup \Mc_2 \sqcup \Mc_3 \sqcup \Mc_4,
$
where
\begin{eqnarray*}
\Mc_1 &=& \left\{ C \in \bigcup_{W \subset [d]} \Mc(\Delta_W (P, Q) ) \;:\;
p_d , q_d\notin C\right\},\\
\Mc_2 &=& \left\{ C \in \bigcup_{W \subset [d]} \Mc(\Delta_W (P, Q) ) \;:\;
p_d \in C\right\},\\
\Mc_3 &=& \left\{ C \in \bigcup_{W \subset [d]} \Mc(\Delta_W (P, Q) ) \;:\;
p_i, q_d \in C\right\},\\
\Mc_4 &=& \left\{ C \in \bigcup_{W \subset [d]} \Mc(\Delta_W (P, Q) ) \;:\;
q_d \in C, p_i \notin C\right\}.
\end{eqnarray*}
By the assumption for $p_d$ and $q_d$, it follows that 
$$\Mc_1 = \left\{ C \in \bigcup_{W \subset [d-1]} \Mc(\Delta_W (P_{[d-1]}, Q_{[d-1]}) ) \;:\;
\{p_i , q_{j_1}, q_{j_2}\} \cap  C \ne \emptyset\right\}.$$
On the other hand, $|\Mc_4| = |\Mc|$, where 
$$
\Mc=
\left\{
C \in \bigcup_{W \subset [d-1]} \Mc(\Delta_W (P_{[d-1]}, Q_{[d-1]}) ) \;:\;
\{p_i , q_{j_1}, q_{j_2}\} \cap  C = \emptyset\
\right\}
$$
since $\Mc_4 = \{C \cup \{q_d\} : C \in \Mc\}$.
Hence we have $|\Mc_1| + |\Mc_4| = |\Mc_1| + |\Mc| =  N(\Gamma (P_{[d-1]} , Q_{[d-1]} ))$.
From Lemma \ref{most important}, 
\begin{eqnarray*}
|\Mc_2| & \le & 2 \cdot N(\Gamma (P_{[d-1] \setminus \{i\}} , Q_{[d-1] \setminus \{i\}} )).
\end{eqnarray*}

\noindent
{\bf Case 1.} ($i \notin \{j_1,j_2\}$.)
First, we show
\begin{eqnarray} \label{12case1}
|\Mc_3| & \le & 4 \cdot N(\Gamma (P_{[d-1] \setminus \{i,j_1,j_2\}} , Q_{[d-1] \setminus \{i,j_1,j_2\}} ))
\end{eqnarray}
from a similar argument as in Proof of Lemma \ref{most important}.
Let
\begin{eqnarray*}
\Mc' &=& \bigcup_{W \subset [d-1] \setminus \{i,j_1,j_2\}} \Mc(\Delta_W (P_{[d-1] \setminus \{i,j_1,j_2\}}, Q_{[d-1] \setminus \{i,j_1,j_2\}}) ),\\
\Mc'' &=& 
\left\{ (C_1,C') : C' \in  \Mc', C_1 \mbox{ is a chain of } P_{\{j_1,j_2\}}
\mbox{ such that }
\{p_i\} \cup C_1 \mbox{ is a chain} 
\right\}.
\end{eqnarray*}
It is enough to show that there exists a surjective map $\varphi:\Mc'' \rightarrow \Mc_3$.
Let $(C_1,C') \in \Mc''$.
Then 
$$C' \in \Mc(\Delta_W (P_{[d-1] \setminus \{i,j_1,j_2\}}, Q_{[d-1] \setminus \{i,j_1,j_2\}}) )$$
for some ${W \subset [d-1] \setminus \{i,j_1,j_2\}}$.
Let
$$C =\{p_i, q_d\} \sqcup C_1 \sqcup C_2 \sqcup C_3,
$$
where 
\begin{eqnarray*}
C_2 &=& \{ q \in C' \cap Q : q \mbox{ is comparable with } q_d\},\\
C_3 &=& \{p \in C' \cap P : \{p, p_i\} \cup C_1 \mbox{ is a chain of } P\}.
\end{eqnarray*}
Then $C$ belongs to $\Mc(\Delta_{W'} (P, Q))$ where 
$W' = \{i\} \sqcup W \sqcup \{k : p_k \in C_1\}$. 
Hence $C \in \Mc_3$.
We define $\varphi$ by $\varphi((C_1,C'))=C$.

We now show that $\varphi$ is surjective.
Suppose that $C \in \Mc_3$.
Then $C \in \Mc(\Delta_{W'} (P, Q))$ for some 
$W' \subset [d-1]$.
Let
\begin{eqnarray*}
    C_1 &:=& C \cap \{p_{j_1},p_{j_2}\},\\
    C_2 &:=& (C \cap Q) \setminus \{q_d\},\\
    C_3 &:=& C \cap P_{[d-1] \setminus \{i,j_1,j_2\}}.
\end{eqnarray*}
Then $C_1$ is a chain of $\{p_{j_1},p_{j_2}\}$
such that
$\{p_i\} \cup C_1$ is a chain, and there exists 
$$C'\in \Mc(\Delta_W (P_{[d-1] \setminus \{i,j_1,j_2\}}, Q_{[d-1] \setminus \{i,j_1,j_2\}}) ),$$  
 where $W = W'  \setminus \{i,j_1,j_2\}$
 such that $C_2 \cup C_3 \subset C'$.
It follows that $\varphi((C_1,C'))=C$ and 
hence $\varphi$ is surjective.
Thus we have (\ref{12case1}).

Therefore, 
\begin{eqnarray*}
N(\Gamma (P , Q )) &\le &
N(\Gamma (P_{[d-1]} , Q_{[d-1]} )) +
2 \cdot N(\Gamma (P_{[d-1] \setminus \{i\}} , Q_{[d-1] \setminus \{i\}} ))\\
&&
+4 \cdot N(\Gamma (P_{[d-1] \setminus \{i,j_1,j_2\}} , Q_{[d-1] \setminus \{i,j_1,j_2\}} )).
\end{eqnarray*}
If $d$ is even, then
$$
N(\Gamma (P , Q )) 
\le \frac{7\sqrt{6}}{18}\cdot 6^\frac{d-1}{2} + 2 \cdot   6^\frac{d-2}{2}+4\cdot 6^\frac{d-4}{2}
= 
\left(
\frac{7}{18} + 
\frac{1}{3} + 
\frac{1}{9} 
\right) \cdot 6^\frac{d}{2}
= \frac{5}{6}\cdot 6^\frac{d}{2}
<6^\frac{d}{2}.
$$
If $d$ is odd, then
\begin{eqnarray*}
N(\Gamma (P , Q )) 
&\le& 6^\frac{d-1}{2} + 2 \cdot \frac{7\sqrt{6}}{18}\cdot 6^\frac{d-2}{2}+4\cdot \frac{7\sqrt{6}}{18} \cdot 6^\frac{d-4}{2}\\
&=& 
\left(
\frac{3}{7} + 
\frac{1}{3} + 
\frac{1}{9} 
\right) \cdot \frac{7\sqrt{6}}{18} \cdot 6^\frac{d}{2}\\
&=& \frac{55}{63}\cdot \frac{7\sqrt{6}}{18} \cdot 6^\frac{d}{2}< \frac{7\sqrt{6}}{18}\cdot 6^\frac{d}{2}.
\end{eqnarray*}

\noindent
{\bf Case 2.} ($i = j_1$.)
From Lemma \ref{most important}, 
we have
\begin{eqnarray*}
| \{ C \in \bigcup_{W \subset [d]} \Mc(\Delta_W (P, Q) ) \;:\; q_d \in C\} |
&\le &  c(P_{j_1,j_2}) \cdot N(\Gamma (P_{[d-1] \setminus \{j_1,j_2\}} , Q_{[d-1] \setminus \{j_1,j_2\}} )).
\end{eqnarray*}
Since the number of chains in $\{p_{j_1},p_{j_2}\}$
that contains $p_{j_1}$ is at most 2,
we have 
\begin{eqnarray*}
|\Mc_3| &\le & 2 \cdot N(\Gamma (P_{[d-1] \setminus \{j_1,j_2\}} , Q_{[d-1] \setminus \{j_1,j_2\}} )).
\end{eqnarray*}
Therefore, 
\begin{eqnarray*}
N(\Gamma (P , Q )) &\le & N(\Gamma (P_{[d-1]} , Q_{[d-1]} )) +
2 \cdot N(\Gamma (P_{[d-1] \setminus \{i\}} , Q_{[d-1] \setminus \{i\}} ))\\
& & +\ 2 \cdot N(\Gamma (P_{[d-1] \setminus \{j_1,j_2\}} , Q_{[d-1] \setminus \{j_1,j_2\}} )).
\end{eqnarray*}
If $d$ is even, then
$$
N(\Gamma (P , Q )) \le  \frac{7\sqrt{6}}{18} \cdot 6^\frac{d-1}{2} + 2 \cdot 6^\frac{d-2}{2}+2\cdot   \frac{7\sqrt{6}}{18} \cdot 6^\frac{d-3}{2}
= 
\left(
\frac{7}{18} + 
\frac{1}{3} + 
\frac{7}{54} 
\right) \cdot  6^\frac{d}{2}
= \frac{23}{27}\cdot  6^\frac{d}{2}
<\  6^\frac{d}{2}.
$$
If $d$ is odd, then
$$
N(\Gamma (P , Q ))  \le  6^\frac{d-1}{2} + 2 \cdot  \frac{7\sqrt{6}}{18}\cdot 6^\frac{d-2}{2}+2\cdot  6^\frac{d-3}{2} 
 = 
\left(
\frac{3}{7} + 
\frac{1}{3} + 
\frac{1}{7} 
\right) \cdot  \frac{7\sqrt{6}}{18} \cdot 6^\frac{d}{2}
=   \frac{19}{21}\cdot  \frac{7\sqrt{6}}{18} \cdot 6^\frac{d}{2}
< \frac{7\sqrt{6}}{18} \cdot 6^\frac{d}{2},
$$
as desired.
\end{proof}

\section{Proof of Theorem \ref{main theorem}} \label{sec: proof}

In the present section, we prove Theorem~\ref{main theorem}.
Before that, we will show that, if $d=3$, then $N(\Gamma (P, Q))$
is less than 
$14 \cdot 6^{\frac{3-3}{2}} =14$.

\begin{prop} \label{d3}
Let $(P, \le_P)$ and $(Q, \le_Q)$ be posets with $|P|=|Q|=3$.
Then we have 
$$N(\Gamma (P, Q)) \le 13.$$
The equality holds if and only if one of $P$
and $Q$ is isomorphic to ${\bf I}_3$ and the other is isomorphic to
one of ${\bf C}_3$, ${\bf 1} \oplus {\bf I}_2$ and ${\bf I}_2 \oplus {\bf 1}$.
\end{prop}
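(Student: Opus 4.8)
The plan is to reduce Proposition \ref{d3} to a finite check, exploiting the symmetries of the construction. First I would record that, by Proposition \ref{facets} together with the fact that a maximal chain of an ordinal sum $P_W\oplus Q_{\overline W}$ is the union of a maximal chain of $P_W$ and one of $Q_{\overline W}$ (equivalently, a maximal clique of $(G_P)_W$ and one of $(G_Q)_{\overline W}$, cf.\ Remark \ref{label}), the number $N(\Gamma(P,Q))$ depends only on the pair of labeled comparability graphs $(G_P,G_Q)$. Since $\Gamma(P,Q)$ and $\Gamma(Q,P)$ are interchanged by the unimodular map $\xb\mapsto-\xb$, we may treat $\{P,Q\}$ as unordered, and we may relabel $P$ and $Q$ simultaneously. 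On three vertices there are exactly four comparability graphs: $K_3$ (realized by ${\bf C}_3$), the path $P_3$ (realized by ${\bf 1}\oplus{\bf I}_2$ and, with the same comparability graph, by ${\bf I}_2\oplus{\bf 1}$), the graph ${\bf E}$ with one edge and one isolated vertex (realized by ${\bf 1}+{\bf C}_2$), and the edgeless graph $\overline{K_3}$ (realized by ${\bf I}_3$).

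Next I would dispose of the pairs already covered by earlier results. Proposition \ref{CCIIIC} gives $N=8$ for $\{{\bf C}_3,{\bf C}_3\}$, $N=12$ for $\{{\bf I}_3,{\bf I}_3\}$, and $N=13$ for $\{{\bf I}_3,{\bf C}_3\}$. For every pair in which one poset is ${\bf C}_3$, Corollary \ref{Pchainform} reduces $N(\Gamma(P,Q))$ to $\sum_{W\subset[3]}|\Mc(Q_W)|$, a sum of eight terms immediate to evaluate from $G_Q$ (it gives $10$ when $G_Q\cong P_3$ and $11$ when $G_Q\cong {\bf E}$), finishing $\{{\bf C}_3,P_3\}$ and $\{{\bf C}_3,{\bf E}\}$. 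What remains are the pairs in which neither poset is a chain, other than $\{{\bf I}_3,{\bf I}_3\}$: namely $\{P_3,P_3\}$, $\{P_3,{\bf E}\}$, $\{P_3,\overline{K_3}\}$, $\{{\bf E},{\bf E}\}$, $\{{\bf E},\overline{K_3}\}$. Here a point of care arises: as Example \ref{rei} shows, permuting the labels of $G_Q$ need not preserve $\Gamma(P,Q)$, so $\{P_3,P_3\}$, $\{P_3,{\bf E}\}$ and $\{{\bf E},{\bf E}\}$ each split into two inequivalent subcases, according to whether the distinguished vertices (the center of a $P_3$, respectively the isolated vertex of an ${\bf E}$) of the two graphs coincide; the pairs $\{P_3,\overline{K_3}\}$ and $\{{\bf E},\overline{K_3}\}$ give a single case since $\overline{K_3}$ is fixed by every relabeling.

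For each of these remaining (eight) cases I would pick explicit representative posets and write out the list $\bigcup_{W\subset[3]}\Mc(\Delta_W(P,Q))$ exactly as in Tables \ref{table:a} and \ref{table:b}, then count the \emph{distinct} maximal chains; the overlaps among the eight blocks are what make the count subtle, as illustrated by Table \ref{table:b}. The two $\{P_3,P_3\}$ subcases are precisely Example \ref{rei}, giving $12$ and $11$. Direct enumeration gives $N=13$ for $\{P_3,\overline{K_3}\}$, $N=12$ for $\{{\bf E},\overline{K_3}\}$, values in $\{11,12\}$ for the two $\{P_3,{\bf E}\}$ subcases, and values in $\{11,12\}$ for the two $\{{\bf E},{\bf E}\}$ subcases. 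Collecting all thirteen values, the maximum is $13$, attained exactly by $\{{\bf C}_3,{\bf I}_3\}$ and by the pair consisting of ${\bf I}_3$ and a poset with comparability graph $P_3$ (i.e.\ ${\bf 1}\oplus{\bf I}_2$, equivalently ${\bf I}_2\oplus{\bf 1}$); in particular, among pairs involving ${\bf I}_3$, the pairs $\{{\bf I}_3,{\bf I}_3\}$ and $\{{\bf I}_3,{\bf 1}+{\bf C}_2\}$ give only $12$. This yields both the bound and the characterization of equality.

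I do not expect a genuine obstacle; the content is pure bookkeeping once the symmetry reduction is in place. The one thing one must not get wrong is the labeling dependence: one has to resist over-identifying cases and keep the two subcases of $\{P_3,P_3\}$, $\{P_3,{\bf E}\}$ and $\{{\bf E},{\bf E}\}$ genuinely separate, since they produce different facet numbers.
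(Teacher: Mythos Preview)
Your approach is correct and essentially parallels the paper's: both reduce to the four comparability-graph types on three vertices, quote Proposition~\ref{CCIIIC} for the pairs $\{{\bf C}_3,{\bf C}_3\}$, $\{{\bf I}_3,{\bf I}_3\}$, $\{{\bf I}_3,{\bf C}_3\}$, use Corollary~\ref{Pchainform} when one poset is a chain, invoke Example~\ref{rei} for $\{P_3,P_3\}$, and enumerate explicitly for $\{P_3,{\bf I}_3\}$. The one methodological difference is in the treatment of the pairs involving ${\bf E}={\bf 1}+{\bf C}_2$: you propose direct enumeration of the subcases (correctly noting the labeling splits), whereas the paper instead applies the degree-based inequalities of Section~\ref{sec:degree of vertices} (specifically the bound~(\ref{11eq}) from Lemma~\ref{st11} and the inequality~(\ref{revo}) from Lemma~\ref{try}) to obtain the upper bound $N\le 12$ in those cases without listing maximal chains. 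Your route is more self-contained and elementary; the paper's route illustrates that the general-$d$ lemmas already suffice at $d=3$ and avoids some bookkeeping, at the cost of yielding only upper bounds rather than the exact values you claim.
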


\begin{proof}
The set of all posets with three elements is $\{{\bf 1} + {\bf C}_2, {\bf C}_3, {\bf I}_3, {\bf 1} \oplus {\bf I}_2,{\bf I}_2 \oplus {\bf 1}\}$.
Since ${\bf 1} \oplus {\bf I}_2$ and ${\bf I}_2 \oplus {\bf 1}$
have the same comparability graph,
we may assume that $P$ and $Q$ belong to $\{{\bf 1} + {\bf C}_2, {\bf C}_3, {\bf I}_3, {\bf 1} \oplus {\bf I}_2\}$.

\bigskip

\noindent
{\bf Case 1.} ($Q =Q_{\{1\}} + Q_{\{2,3\}} \cong  {\bf 1} + {\bf C}_2$.)
If $P$ is isomorphic to either ${\bf 1} + {\bf C}_2$ or ${\bf 1} \oplus {\bf I}_2$, at least one of $\deg_{G_P} p_2$ or $\deg_{G_P} p_3$ is one.
We may assume that $\deg_{G_P} p_3=1$.
Since $\deg_{G_Q} q_3=1$, 
from (\ref{11eq}) in Proof of Lemma \ref{st11}, we have
$
  N(\Gamma (P, Q))
  \le
 6 \cdot N(\Gamma ({\bf 1}, {\bf 1}))
 = 12.
$
Suppose that $P$ is isomorphic to ${\bf I}_3$.
Then $\deg_{G_P} p_1=\deg_{G_Q} q_1=0$.
From Lemma \ref{try}, we have 
$
  N(\Gamma (P, Q))
  \le 
  N(\Gamma (P_{\{2,3\}}, Q_{\{2,3\}}))
  + c(P_{\{2,3\}}) +c(Q_{\{2,3\}})
  =5+3+4=12.
$
If $P$ is isomorphic to ${\bf C}_3$, then 
$N(\Gamma (P, Q))=11$
by Corollary~\ref{Pchainform}. 

\medskip 

\noindent
{\bf Case 2.} ($P, Q \not\cong  {\bf 1} + {\bf C}_2$.)

From Proposition \ref{CCIIIC},
$$N(\Gamma ({\bf C}_3, {\bf C}_3))=8,\ 
N(\Gamma ({\bf I}_3, {\bf I}_3))=12, \ 
N(\Gamma ({\bf I}_3, {\bf C}_3))=13.
$$
Thus we may assume that $P \not\cong {\bf C}_3, {\bf I}_3$.
Then $P$ is isomorphic to ${\bf 1} \oplus {\bf I}_2$.
If $Q$ is isomorphic to ${\bf 1} \oplus {\bf I}_2$, then $N(\Gamma (P, Q)) = 11 \mbox{ or } 12$ from Example \ref{rei}.
If $Q \cong {\bf C}_3$ ($={\bf 1} \oplus {\bf C}_2$),
then 
$$N(\Gamma (P, Q)) = N(\Gamma ({\bf 1}, {\bf 1})) \cdot N(\Gamma ({\bf I}_2, {\bf C}_2))
=2 \cdot 5 =  10$$ from Proposition \ref{direct sum}.
Suppose that $Q \cong {\bf I}_3$.
Let $P = \{p_1,p_2,p_3\}$ where $p_1$ and $p_2$ are not comparable.
Then $\bigcup_{W \subset [3]} \Mc(\Delta_W (P, Q))$ consists of
$$
\{q_1\}, \{q_2\}, \{q_3\}, 
\{p_1,q_2\}, 
\{p_1,q_3\}, 
\{p_2,q_1\}, 
\{p_2,q_3\}, 
\{p_3,q_1\}, 
\{p_3,q_2\},
$$
$$
\{p_1,p_3\},
\{p_1,p_3,q_2\},
\{p_2,p_3\},
\{p_2,p_3,q_1\}.
$$
Hence $N(\Gamma(P, Q)) = 13$.
(Table~\ref{d=3} shows the values of $N(\Gamma(P, Q))$ in Proof of Proposition~\ref{d3}.)
\end{proof}

\begin{table}[ht]
\centering
\renewcommand{\arraystretch}{1.2}
\begin{tabularx}{0.6\linewidth}{|C|C|C|C|C|}
\hline
\diagbox[height=1.5\line]{$Q$\ }{$P$} & ${\bf 1} + {\bf C}_2$ & ${\bf C}_3$ & ${\bf I}_3$ & ${\bf 1} \oplus {\bf I}_2$ \\ 
\hline
${\bf 1} + {\bf C}_2$ &$\le 12$ & $11$ & $\le 12$ & $\le 12$ \\ 
\hline
${\bf C}_3$ &  & $8$ & $13$ & $10$ \\ 
\hline
${\bf I} _3$ &  &  & $12$ & $13$ \\ 
\hline
${\bf 1} \oplus {\bf I}_2$ &  &  &  & $11 \text{ or } 12$ \\ 
\hline
\end{tabularx}
\smallskip
\caption{The values of $N(\Gamma(P, Q))$ for posets with $|P|=|Q|=3$}
\label{d=3}
\end{table}

We note here that the twinned chain polytope and the symmetric edge polytope are distinctly different classes of lattice polytopes.

\begin{remark}
Let $G$ be a finite simple graph on the vertex set $[d+1]$ with the edge set $E(G)$. 
The \textit{symmetric edge polytope $\Pc_G$} of $G$ is the convex hull of
$
 \{ \pm (\eb_i - \eb_j) : \{i,j\} \in E(G) \},
$
where $\eb_i$ is the $i$-th unit coordinate vector in $\RR^{d+1}$. 
It is known that the symmetric edge polytope of a connected graph with $d+1$ vertices is a centrally symmetric
reflexive $d$-dimensional polytope.
\begin{itemize}
    \item 
The number of vertices of $\Gamma({\bf I}_3, {\bf I}_3)$ is 14 by \cite[Proposition 3.3]{CFS}.
On the other hand, if $d = 3$, then
the number of vertices of $\Pc_G$ is less than or equal to $2 \cdot \binom{4}{2}=12$
and the equality holds if and only
if $G$ is the complete graph $K_4$. 
Thus, $\Gamma({\bf I}_3, {\bf I}_3)$ is not isomorphic to $\Pc_G$ for any graph $G$.

\item 
From \cite[Proposition 3.5]{HJM}, we have $N(\Pc_{K_4}) = 14$.
On the other hand, if $d = 3$, then $N(\Gamma(P, Q)) \le 13$ by Proposition \ref{d3}.
Hence, $\Pc_{K_4}$ is not isomorphic to $\Gamma(P, Q)$
for any posets $P$ and $Q$.
    
\end{itemize}
\end{remark}

We now prove the main theorem of this paper.

\begin{proof}[Proof of Theorem \ref{main theorem}]
By Corollary \ref{d2} and Proposition \ref{d3},
the assertion holds if $d \le 3$.
The proof proceeds by induction.
Let $d \ge 4$.
From Proposition~\ref{P=chain}, we may assume that
there exists $k \in [d]$ such that
$p_k$ is incomparable with exactly $s$ elements in $P$
and $q_k$ is incomparable with exactly $t$ elements in $Q$ with $st \ne 0$.
Then one of the following holds:
\begin{itemize}
    \item[(i)]
    $s + t \ge 4$,
    \item[(ii)]
    $(s,t) = (1,1)$,
    \item[(iii)]
    $(s,t) \in \{ (1,2), (2,1)\}$.    
\end{itemize}
If either (i) or (iii) holds, then   
\begin{equation*} 
    N(\Gamma (P, Q))
    \left\{
    \begin{array}{ccl}
    < &6^{\frac{d}{2}} &  \mbox{if } d \mbox{ is even},\\
    \le &14 \cdot 6^{\frac{d-3}{2}}& \mbox{if } d \mbox{ is odd}.\\
    \end{array}
    \right.
\end{equation*}
from Lemmas \ref{try} and \ref{st12}, respectively.
Suppose that (ii) holds. From Lemma \ref{st11}, 
$N(\Gamma (P , Q ))$ satisfies {\rm (\ref{even odd eq})}.
Moreover, for even $d$, the equality
$N(\Gamma (P , Q )) = 6^\frac{d}{2}$
holds if and only if 
\begin{itemize}
\item 
$N(\Gamma(P_{[d] \setminus \{i,k\}}, Q_{[d] \setminus \{i,k\}})) = 6^{\frac{d-2}{2}} $,
    \item
    $P= \{p_i, p_k\} \oplus P_{[d] \setminus \{i,k\}}$,
    \item 
    $Q =  \{q_i, q_k\} \oplus Q_{[d] \setminus \{i,k\}}$,
\end{itemize}
where $p_k$ (resp. $q_k$) is incomparable with $p_i$ (resp. $q_i$).
From
the hypothesis of induction, we have
$P= \{p_i, p_k\} \oplus P_{[d] \setminus \{i,k\}} \cong \{p_i, p_k\} \oplus {\bf I}_2 \oplus \dots \oplus {\bf I}_2$, $Q =  \{q_i, q_k\} \oplus Q_{[d] \setminus \{i,k\}}\cong \{q_i, q_k\} \oplus {\bf I}_2 \oplus \dots \oplus {\bf I}_2$,
and the map $\varphi: P \rightarrow Q$ defined by $\varphi(p_i) = q_i$ induces a graph  isomorphism from $G_P$ to $G_Q$.
(See Remark~\ref{label} for the statement about the map from the poset $P$ to $Q$.)
\end{proof}


\subsection*{Acknowledgment}
This work was supported by JSPS KAKENHI 24K00534.


\end{document}